\theoremstyle{plain}
\newtheorem{theorem}{Theorem}[section]
\newtheorem{lemma}{Lemma}[section]
\newtheorem{corollary}{Corollary}[section]
\newtheorem{proposition}{Proposition}[section]
\newtheorem*{claim*}{Claim}
\newtheorem*{lemma*}{Lemma}
\newtheorem*{theorem*}{Theorem}
\theoremstyle{definition}
\newtheorem{definition}{Definition}[section]
\newtheorem{example}{Example}[section]
\theoremstyle{remark}
\newtheorem{remark}{Remark}
\newtheorem*{remark*}{Remark}
\newtheorem*{conjecture*}{Conjecture}
\begin{document}

\title[Analysis of o.a.p \& the G-K conjecture]{Analysis of orbit accumulation points and the Greene-Krantz conjecture}

\author{Bingyuan Liu}
\address{Department of Mathematics, Washington University, Saint Louis, USA}
\email{bingyuan@math.wustl.edu}



\date{July 22, 2014}



\begin{abstract}
In $\mathbb{C}^2$, we classify the domains for which $\rm Aut(\Omega)$ is noncompact and describe these domains by their defining functions. This note is based on the technique of the scaling method introduced by Frankel \cite{Fr86} and Kim \cite{Ki90}. One feature of this article is that we are able to analyze the defining functions of infinite type boundary. As a corollary, we also prove a result that under some conditions, $\rm Aut(\Omega)$ contains $\mathbb{R}$, which is an extension of \cite{Fr86}. 
\end{abstract}

\maketitle

\section{Introduction}

We call a connected open subset in $\mathbb{C}^2$ a domain, and by an automorphism we mean a holomorphic automorphism. 

Let $\mathbb{D}$ denote the unit disc, and let $\mathbb{H}^\pm$ denote the upper-half (lower-half) plane of $\mathbb{C}$. We use $\Im z, \Re z$ to denote the imaginary and real part of $z$. Also, we will not distinguish convergence and subsequence convergence. We denote the operator norm of a matrix by $\|\cdot\|_{op}$. We also denote $w=u+iv$ sometimes.

When a bounded convex domain $\Omega$ in $\mathbb{C}^2$ admits a noncompact automorphism group, Frankel was able to construct in \cite{Fr86} a new (in general unbounded) domain $\Omega'$ which is biholomorphic, by the inverse of the limit of $(J\phi_j(q))^{-1}(\phi_j(z,w)-\phi_j(q))$, to $\Omega$, where $q\in\Omega$ is an arbitrary interior point. Soon after that, Kim described the domain $\Omega'$ in \cite{Ki90} by the defining function. His argument is based on the fact that the boundary of $\Omega$ is invariant under automorphism and that the boundary of $\Omega'$ must exist (otherwise, $\Omega'$ can not be hyperbolic, i.e., $\Omega'$ can not be equivalent to a bounded domain in $\mathbb{C}^2$). Moreover, it is enough to use the local defining function around the accumulation points of $\Omega$ to analyze the boundary of $\Omega'$ because other parts of boundary will be transformed, roughly speaking, to $\infty$. 

Pinchuk's scaling method (see \cite{BP91}) is also well-known. His method directly involves the boundary during the procedure of construction for the final biholomorphism. Although his method is also interesting, we will not treat it further here.

It is well-known that in $\mathbb{C}^2$, possibly after a global biholomorphic transform $F$, an arbitrary domain with a boundary point $p\in\partial\Omega$ can be defined by the defining function  $\Im w>\rho(z,\bar{z}, \Re w)$ locally around $(0,0)$, where $\rho(z,\bar{z},\Re w)=O(|z|^2)+O((\Re w)^2)+O(z\Re w)$ and $p$ has been translated to $(0,0)$.

\begin{definition}[normal domain at a point]
Let $p\in\partial\Omega$. If there exists a neighborhood $U$ of $p$ in $\mathbb{C}^2$ such that after the global transform $F$ as above, $F(\Omega\cap U)=\lbrace (z,w): \Im w>\rho(z,\bar{z}, \Re w) \rbrace$, where locally $\rho(z,\bar{z},\Re w)=\rho(z,\bar{z},0)+O((\Re w)^2)+O(z\Re w)$, then we call $\Omega'=F(\Omega)$ a normal domain of $\Omega$ at $p$.
\end{definition}

In $\mathbb{C}^2$, the bidisc $\mathbb{D}\times\mathbb{D}$ has been intensely studied in the field of several complex variables and the fact that it is not biholomorphic to the ball can be dated back to the period of Poincar\'{e}. However, the following type of domain is less well-known.

We denote $\Omega=\lbrace (z,w)\in\mathbb{C}^2: z\in\mathbb{D}, w\in e^{i\theta(z)}\mathbb{H}^+\rbrace$, where $\theta$ is a real continuous function of $z$, by $\mathbb{D}\rtimes_\theta\mathbb{H}^+$. One can see that, if $\theta$ is a zero function, then it is biholomorphic to the bidisc, but in general this is not the case. Roughly speaking, this type of domain is a fiber bundle with the base $\mathbb{D}$, and the fiber above each point of the base is a rotation of $\mathbb{H}^+$ by an angle determined by the base point. More generally, for an arbitrary domain $\mathfrak{I}\subset\mathbb{C}$ we denote $\mathfrak{I}\rtimes_\theta\mathbb{H}:=\lbrace (z,w): z\in\mathfrak{I}, w\in e^{i\theta(z)}\mathbb{H}^+ \rbrace$, where $\theta$ is a real continuous function of $z\in\mathfrak{I}$.

\begin{example}
$\mathbb{D}\rtimes_{2\Re}\mathbb{H}^+=\lbrace (z,w)\in\mathbb{C}^2: |z|<1, w\in e^{2i\Re z}\mathbb{H}^+\rbrace$\\
\end{example}

We denote the automorphism group by $\rm Aut(\Omega)$. This is the set containing all holomorphic automorphisms of $\Omega$ endowed with the operation of composition.

The automorphism group of $\mathbb{D}\rtimes_{2\Re}\mathbb{H}^+$ is not compact. Indeed, for an arbitrary natural number $n$ let $L_n:=(z, \dfrac{w}{n})$. One can see $L_n\in\rm Aut(\mathbb{D}\rtimes_{2\Re}\mathbb{H}^+)$ for arbitrary $n$, and there is an interior point $q$ such that $L_n(q)\rightarrow p$ for some $p\in\partial(\mathbb{D}\rtimes_{2\Re}\mathbb{H}^+)$ as $n\rightarrow\infty$.
\begin{remark}
The reader should be warned, $\mathbb{D}\rtimes_\theta\mathbb{H}^+$ defined here in general has a nonsmooth boundary (at most piecewise smooth). Indeed, it is Levi-flat for each smooth piece. We will use the scaling method to obtain a biholomorphism between some bounded domains and $\mathbb{D}\rtimes_\theta\mathbb{H}^+$. However, there are no clues on whether any bounded domain with a (globally) smooth boundary is biholomorphic to it. This is one of the key points in Greene-Krantz conjecture. For the discussion, please see Section \ref{sec3}.
\end{remark}

Since the note will frequently mention the concept ``finite type'' (in sense of D'Angelo), we will define it here briefly. The interested reader is referred to \cite{D'93}.

\begin{definition}
Let $X$ be a nontrivial analytic disc passing through $(0,0)$ in $\mathbb{C}^2$ defined by $\psi:\mathbb{D}\rightarrow\mathbb{C}^2$, with $\xi\mapsto(f_1(\xi), f_2(\xi))$. Let $\rho$ be the defining function of a domain $\Omega$ for which $(0,0)\in\partial\Omega$ . We say that $\Omega$ is type $t$ around $(0,0)$ if 
\begin{equation*}
\tau(\partial\Omega, (0,0)):=\sup_{X}\frac{\nu(\rho\circ\psi)}{\nu(\psi)}=t,
\end{equation*}
where $\nu(f)$ denotes the order of vanishing of $f$ at $0$. 
If $t=\infty$, $\Omega$ is said to be of infinite type around $(0,0)$, and similarly for finite type.
\end{definition}

\begin{remark}
By the definition, one can see the following immediately. Assume the defining function is $\rho=v-r(z,\bar{z},u)=v-r(z,\bar{z},0)-C(z,\bar{z})u-o(u^2)$. Then if $(0,0)$ is finite type, it means $r(z,\bar{z},0)$ has to have a finite order of vanishing at $0$ (otherwise, let $\psi=(\xi,0)$, and $\tau=\infty$). On the other hand, assume $\rho=v-r(z,\bar{z})$ is the defining function; if $(0,0)$ is infinite type, then $r$ has to be of $o(|z|^m)$ for any positive $m$, as otherwise $\tau<\infty$. 
\end{remark}

The study of infinite types is extremely hard because it is essentially the study of smooth functions that are not analytic. Since it is not very clear what happens for a non-analytic smooth function, we are unable to analyze them all.

\begin{definition}
Let $\Omega\subset\mathbb{C}^2$ be a domain and let $p\in\partial\Omega$ be a boundary point of infinite type. Assume the defining function of the normal domain of $\Omega$ at $p$ is $v=\rho(z,\bar{z},0)+\text{higer terms}$.
\begin{enumerate}
\item We say $p$ is of infinite type I, if $\rho$ satisfies the condition that $\rho(az,a\bar{z},0)\leq o(1)\rho(z,\bar{z},0)$ for all complex number $a$ such that $0<|a|<1$.
\item Otherwise, we say $p$ is of infinite type II.
\end{enumerate}
\end{definition}

Note that the condition of infinite type I implies that for any complex $0<|a|<1$, we have $\rho(az,a\bar{z},0)\leq |a|^{m_z}\rho(z,\bar{z},0)$, where $m_z\rightarrow\infty$ as $z\rightarrow 0$.
It is not hard to see that the domain $\Omega_{G-K}$ defined by $|w|^2+|z|^{\frac{1}{-|z|^2}}<1$ in \cite{GK93} is of infinite type I. This is because of direct computation after finding the normal domain of $\Omega_{G-K}$, which is $\lbrace(z,w=u+iv)\in\mathbb{C}^2: v>1-\sqrt{1-e^{-\frac{1}{|z|^2}}-u^2}\rbrace$. Furthermore, all other known bounded domains which support the Greene-Krantz conjecture (see Section \ref{sec3}) are of infinite type I. It is also interesting to ponder whether a domain of infinite type II exists in the context of basic function theory.

Among other results, the most important is the following theorem.

\begin{theorem}\label{main}
Let $\Omega$ be a bounded domain in $\mathbb{C}^2$. Assume there is a family of automorphisms $\phi_j=(f_j, g_j)$ and an interior point $q\in\Omega$ such that $\phi_j(q)\rightarrow p$, where $p\in\partial\Omega$ is not of infinite type II. We also assume the normal domain of $\Omega$ at $p$ is locally convex and has a smooth boundary around $p$. Then $\Omega$ is biholomorphic to $\mathbb{D}\rtimes_\theta\mathbb{H}^+$ of $v>\rho_k(z,\bar{z})$ where $\rho_k$ is an homogeneous polynomial with degree $k$. 
\end{theorem}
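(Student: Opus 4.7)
The plan is to execute a Frankel--Kim anisotropic scaling along the orbit $\phi_j(q) \to p$ and identify the resulting limit model. First I would use the normal-domain hypothesis at $p$: after the global transform $F$ and a translation sending $p$ to the origin, $\Omega$ becomes locally $\{v > \rho(z,\bar z,u)\}$ with $\rho(z,\bar z,u) = \rho(z,\bar z,0) + O(u^2) + O(zu)$, where $\rho(\cdot,\cdot,0)$ is smooth and convex by hypothesis. After a small normalization of the ambient coordinates I may also assume $\phi_j(q)=(0,i\epsilon_j)$ with $\epsilon_j \to 0^+$.

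Next, I introduce anisotropic dilations $D_j(z,w) := (\lambda_j z,\,\epsilon_j w)$ and rescaled automorphisms $\Phi_j := D_j^{-1}\circ(\phi_j - \phi_j(q))$, choosing $\lambda_j$ so that the rescaled defining function $\rho_j(z,\bar z) := \epsilon_j^{-1}\rho(\lambda_j z,\lambda_j \bar z,0)$ converges to a nontrivial plurisubharmonic limit. In the finite D'Angelo type case at $p$, with type $2k$ and $\rho(z,\bar z,0) = \rho_k(z,\bar z) + o(|z|^{2k})$ where $\rho_k$ is the homogeneous leading part, the choice $\lambda_j := \epsilon_j^{1/(2k)}$ gives $\rho_j \to \rho_k$ on compacta; the higher-order $u$-terms of $\rho$ drop out at this scaling owing to the normal-form shape $O(u^2)+O(zu)$. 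Frankel's normal-family argument, using tautness of bounded $\Omega$ and the normalization of $J\phi_j(q)$ built into $D_j$, then yields a subsequential limit $\Phi:\Omega \to \Omega'$ which is a biholomorphism, where $\Omega' = \{(z,w) : v > \rho_k(z,\bar z)\}$.

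I expect the main obstacle to be ruling out the infinite-type-I alternative, since the theorem's conclusion asserts a genuine polynomial $\rho_k$ rather than any super-polynomial vanishing profile. The condition $\rho(az, a\bar z, 0) \leq o(1)\rho(z,\bar z,0)$ for all $0 < |a| < 1$ forces $\rho(\cdot,\cdot,0)$ to vanish to super-polynomial order at $0$, so for any scale $\lambda_j \to 0$ the rescaled $\rho_j$ tends uniformly on compacta either to the zero function, yielding the half-space $\{v>0\}$ which is not hyperbolic, or to $+\infty$, yielding an empty limit domain---both incompatible with $\Omega'$ being biholomorphic to the bounded hyperbolic $\Omega$ via $\Phi$. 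Hence the convex, smooth, and not-infinite-type-II hypotheses together force $p$ to be of finite type, producing the homogeneous polynomial $\rho_k$ of the previous paragraph.

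Finally, to recognize the form $\mathbb{D}\rtimes_\theta\mathbb{H}^+$: because $\rho_k(z,\bar z)$ depends only on $z$, the slice of $\Omega'$ above each fixed $z_0$ is the vertical half-plane $\{w : \Im w > \rho_k(z_0, \bar z_0)\}$, which is a vertical translate of $\mathbb{H}^+$ and hence exactly of the required $e^{i\theta(z_0)}\mathbb{H}^+$ shape with $\theta \equiv 0$; a conformal change of the $z$-variable supplies the bounded $\mathbb{D}$-base, completing the identification of $\Omega$ with $\mathbb{D}\rtimes_\theta\mathbb{H}^+$ having defining equation $v > \rho_k(z,\bar z)$.
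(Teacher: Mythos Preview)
Your scaling argument for the finite-type and infinite-type-I subcases is broadly in the spirit of the paper's Lemma~\ref{1} and Proposition~\ref{2}, and your way of ruling out infinite type I (the rescaled $\rho_j$ collapses to $0$, yielding the non-hyperbolic $\{v>0\}$) is essentially the same idea as Lemma~\ref{disa} combined with Proposition~\ref{2}. However, there is a genuine structural gap: you have treated only the \emph{orbit accumulation point case}, in which both eigenvalue functions $\lambda_j^1,\lambda_j^2$ of $J\phi_j$ tend to $0$. The paper's proof rests on the dichotomy developed in Section~\ref{sec1}: it is equally possible that only one eigenvalue degenerates, so that the limit map $\phi_0$ has rank one and its image $\mathfrak{I}\subset\partial\Omega$ is a one-dimensional analytic variety (the \emph{orbit accumulation variety case}). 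In that situation the Frankel scaling $A_j=(J\phi_j(q))^{-1}(\phi_j-\phi_j(q))$ is replaced by a modified scaling built from the eigenvectors of $J\phi_j(q)$ and only the degenerating eigenvalue (see the lemma preceding Lemma~\ref{3}), and the limit domain is identified as $\mathbb{D}\rtimes_\theta\mathbb{H}^+$ via Lemma~\ref{3}. Your proposal never considers this case, so it cannot produce the $\mathbb{D}\rtimes_\theta\mathbb{H}^+$ alternative.

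This omission is linked to the error in your final paragraph. The conclusion of the theorem is a \emph{disjunction}: $\Omega$ is biholomorphic either to some $\mathbb{D}\rtimes_\theta\mathbb{H}^+$ (variety case) or to a model $\{v>\rho_k(z,\bar z)\}$ with $\rho_k$ homogeneous (point case, finite type). These are genuinely different outcomes, not two descriptions of the same object; for instance $\{v>|z|^2\}$ is biholomorphic to the ball, which is certainly not of the form $\mathbb{D}\rtimes_\theta\mathbb{H}^+$. Your attempt to realize $\{v>\rho_k\}$ as $\mathbb{D}\rtimes_\theta\mathbb{H}^+$ fails because the $z$-base of $\{v>\rho_k\}$ is all of $\mathbb{C}$, not a hyperbolic planar domain, so no ``conformal change of the $z$-variable'' can produce $\mathbb{D}$. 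To repair the argument you must (i) invoke the eigenvalue dichotomy of Section~\ref{sec1}, (ii) handle the variety case with the modified scaling and the analysis of Lemma~\ref{3}, and (iii) present the two models as alternatives rather than identifying them.
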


It will be very pleasant if the ``locally convex'' condition can be removed. However, it turns out that this condition is not removable in the current note with the scaling method, because the scaling will not converge otherwise. For the discussion, please see Remark \ref{nor}.

In Section \ref{sec1}, we will prove some properties about the Jacobian of one family of noncompact automorphism maps. We also extend Cartan's theorem about determinant of the Jacobian (see \cite{Na95}) to the eigenvalue functions of the Jacobian $J\phi_j$. But please note that the Cartan's theorem does not hold if we replace determinant with the norm of the matrix (see Remark \ref{nr}). We will prove Theorem \ref{main} in Section \ref{sec2} and we will add some remarks about Greene-Krantz conjecture in Section \ref{sec3}. In Section \ref{sec3} we also use the recent result of \cite{CV13} to prove the bidisc is not biholomorphic to any bounded domain with smooth boundary and finally affirm the Greene-Krantz conjecture for a special case (see Corollary \ref{GK}).

Finally, we want to point out that most of the results in this article quite possibly may hold in higher dimensions. However, the proof will be slightly different, because in higher dimension, the uniformization theorem will be unavailable and the concept of ``finite type'' will be more subtle.

\section{The general properties of $\rm Aut(\Omega)$}\label{sec1}
Let $\lbrace \phi_j\rbrace_{j=1}^\infty\subset \rm Aut(\Omega)$ be a sequence in the automorphism group of $\Omega$. We denote by $\lambda_j^i$, $i=1,2$ the two eigenvalues (functions) of the Jacobian of the biholomorphic map $\phi_j$. We also let $\lambda_0^i$, $i=1,2$ be the limit of $\lambda_j^i$ as $j\rightarrow\infty$. We order $\lambda_j^i$ with $\succ$ by using the lexicographic order from the triple $(\|\cdot\|,\Re\cdot, \Im\cdot)$. We also sometimes abuse notation by omitting $j$ when $j> 0$.

\begin{proposition}
Let $\Omega$ be a bounded domain in $\mathbb{C}^2$. Assume there is a family of automorphisms $\phi_j=(f_j, g_j)$ such that there is an interior point $q\in\Omega$ and a boundary point $p\in\partial\Omega$ with $\phi_j(q)\rightarrow p$ as $j\rightarrow\infty$. Then, $p$ is the UNIQUE orbit accumulation point if and only if $\lambda^i_j(z,w)\rightarrow 0$ as $j\rightarrow\infty$ for all $(z, w)\in\Omega$. 
\end{proposition}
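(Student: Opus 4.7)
The plan is to analyze subsequential limits of $\{\phi_j\}$ via Montel's normal family theorem together with H. Cartan's dichotomy theorem for automorphism sequences in bounded domains. Since $\Omega$ is bounded, any subsequence of $\{\phi_j\}$ has a sub-subsequence $\phi_{j_k}$ converging locally uniformly to a holomorphic map $\phi:\Omega\to\overline{\Omega}$. By Cartan's dichotomy, either $\phi\in\mathrm{Aut}(\Omega)$ or $\phi(\Omega)\subset\partial\Omega$; the hypothesis $\phi(q)=p\in\partial\Omega$ rules out the first alternative, so $\phi(\Omega)\subset\partial\Omega$. The Jacobians satisfy $J\phi_{j_k}\to J\phi$ locally uniformly, and by continuity of the roots of the characteristic polynomial, the eigenvalues $\lambda^i_{j_k}(z,w)$ converge pointwise to the eigenvalues of $J\phi(z,w)$.

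For the direction $(\Rightarrow)$: assume $p$ is the unique orbit accumulation point. For any subsequential limit $\phi$ and any $(z,w)\in\Omega$, the value $\phi(z,w)=\lim\phi_{j_k}(z,w)$ is itself an orbit accumulation point, hence equals $p$. Thus $\phi\equiv p$ is the constant map, so $J\phi\equiv 0$, giving $\lambda^i_0\equiv 0$. Because every subsequence produces the same limit $0$, the full sequence $\lambda^i_j(z,w)\to 0$ for every $(z,w)\in\Omega$.

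For the direction $(\Leftarrow)$: assume $\lambda^i_j(z,w)\to 0$ for every $(z,w)\in\Omega$ and $i=1,2$. Then any subsequential limit $\phi$ satisfies $\mathrm{tr}(J\phi)\equiv\det(J\phi)\equiv 0$, so $J\phi$ is nilpotent throughout $\Omega$; combined with $\phi(\Omega)\subset\partial\Omega$ (a real three-manifold), the complex rank of $\phi$ is at most $1$. The goal is to show $\phi$ must be constant; then $\phi\equiv p$, hence $\phi_j\to p$ locally uniformly, and $p$ is the unique orbit accumulation point.

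The hard part of the proof is the final step of $(\Leftarrow)$. Nilpotency of $J\phi$ alone does not force $\phi$ to be constant, as the model $\phi(z,w)=(z+w,-z-w+c)$ shows. If $\phi$ had rank $1$ on some open set $U\subset\Omega$, a local factorization $\phi=\iota\circ\pi$ through an immersion of the image curve $\Sigma\subset\partial\Omega$ reduces nilpotency to the orthogonality $\nabla\pi\cdot c'(\pi)\equiv 0$, placing the tangent of $\Sigma$ inside the fiber of $\pi$. To rule this configuration out I expect to use that the $\phi_j$ are genuine biholomorphisms: applying Montel and Cartan's dichotomy also to the inverse sequence $\{\phi_j^{-1}\}$ and exploiting injectivity, together with the rigidity of having an entire complex curve embedded in $\partial\Omega$, should produce the required contradiction.
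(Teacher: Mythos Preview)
Your $(\Rightarrow)$ argument is correct and is essentially the paper's: once every subsequential limit is the constant map $p$, the Jacobian limit vanishes identically and both eigenvalues go to zero. The paper phrases this direction contrapositively, pushing forward an analytic disc along a direction in which $\|J\phi_j(a)v\|$ stays bounded away from zero, but the content is the same.

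For $(\Leftarrow)$ you have located a genuine gap, and your hesitation is well founded: the implication is in fact \emph{false} as stated, so no argument---neither your sketch via $\phi_j^{-1}$ and boundary rigidity, nor the paper's one line ``the other direction is trivial by considering the rank of $J\phi_0$''---can close it. A clean counterexample lives on the bidisc $\mathbb{D}\times\mathbb{D}$:
\[
\phi_j(z,w)=\Bigl(\tfrac{w-\alpha_j}{1-\overline{\alpha_j}\,w},\ z\Bigr),\qquad \alpha_j\to 1.
\]
Each $\phi_j$ is an automorphism (a coordinate swap composed with a M\"obius map), $\phi_j(0,0)=(-\alpha_j,0)\to(-1,0)=p\in\partial(\mathbb{D}\times\mathbb{D})$, and
\[
J\phi_j(z,w)=\begin{pmatrix}0&\dfrac{1-|\alpha_j|^2}{(1-\overline{\alpha_j}\,w)^2}\\[1.2ex]1&0\end{pmatrix},
\qquad
\lambda^{1,2}_j(z,w)=\pm\,\frac{\sqrt{1-|\alpha_j|^2}}{\,1-\overline{\alpha_j}\,w\,}\ \longrightarrow\ 0.
\]
Yet $J\phi_0=\bigl(\begin{smallmatrix}0&0\\1&0\end{smallmatrix}\bigr)$ is the nonzero nilpotent you anticipated, and $\phi_0(z,w)=(-1,z)$ sweeps out the whole disc $\{-1\}\times\mathbb{D}\subset\partial(\mathbb{D}\times\mathbb{D})$: the orbit accumulation set is a variety, not the single point $p$.

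The biconditional becomes true if one replaces ``$\lambda^i_j\to 0$'' by ``$J\phi_j\to 0$'' (equivalently $\|J\phi_j\|_{op}\to 0$ pointwise); with that amendment both of your arguments go through verbatim, and this is in effect what the paper relies on downstream when it separates the ``orbit accumulation point'' and ``orbit accumulation variety'' cases. So your diagnosis is right; the fix is to amend the statement rather than the proof.
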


\begin{proof}
Suppose we can find a vector $v$ and an interior point $a\in\Omega$ such that the pushforward $\|(\phi_j^*(a))v\|=\|J\phi_j(a)\cdot v\|>\epsilon_0$ as $j\rightarrow\infty$. Let $h$ be an analytic disc with $h'(0)=v$ and $h(0)=a$. We consider $\phi_j\circ h$, and by Cartan's theorem the image of the limit of $\phi_j\circ h$ must be contained in the boundary. Since the pushforward is never zero around $a$, the limit of the image of $\phi_j\circ h$ is non-trivial, which contradicts our assumption. The other direction is trivial by considering the rank of the Jacobian of the limit map $\displaystyle\phi_0=\lim_{j\to\infty}\phi_j$.
\end{proof}

The following lemma says that in $\mathbb{C}^2$, the eigenvalues (functions) of the Jacobian of a biholomorphism are holomorphic functions not just the determinant.

\begin{lemma}\label{st}
Let $\Omega$ be a bounded domain in $\mathbb{C}^2$. Assume there is a biholomorphic map $\phi_0=(f_0, g_0)$. Then the eigenvalue functions $\lambda^i(z,w)$ of $J\phi_0$ are both holomorphic functions for $i=1,2$. Moreover, if there is a family of automorphisms $\phi_j$ such that there exists an interior point $q$ and a boundary point $p\in\partial\Omega$ with $\phi_j(q)\rightarrow p$ as $j\rightarrow\infty$, then the eigenvalue functions $\lambda_j^i(z,w)$ of $J\phi_j(z,w)$ approach  holomorphic functions which are either zero everywhere or nowhere vanishing for $(z,w)\in \Omega$.
\end{lemma}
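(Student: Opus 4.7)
The plan is to treat the two assertions of Lemma \ref{st} in sequence. For the first assertion, the approach is through the characteristic polynomial of the $2\times 2$ matrix $J\phi_0$: setting $T=\operatorname{tr}(J\phi_0)$ and $D=\det(J\phi_0)$, both are holomorphic on $\Omega$, with $D$ nowhere vanishing because $\phi_0$ is biholomorphic. The eigenvalues are $\lambda^{\pm}=(T\pm\sqrt{T^2-4D})/2$, so the real work is exhibiting a single-valued holomorphic branch of $\sqrt{T^2-4D}$ on $\Omega$. Off the coincidence locus $V=\{T^2-4D=0\}$ the two roots are distinct, and I would write each as a Cauchy-type residue $\lambda^{\pm}(z,w)=\frac{1}{2\pi i}\oint_{\gamma^{\pm}}\zeta\,\frac{\chi'(\zeta)}{\chi(\zeta)}\,d\zeta$ with $\chi(\zeta)=\zeta^2-T\zeta+D$, giving holomorphy on $\Omega\setminus V$. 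To globalize I would use that $T^2-4D=(\lambda^+-\lambda^-)^2$ is a formal square, check that monodromy along loops in $\Omega\setminus V$ cannot exchange the two branches, and extend across the codimension-one set $V$ via Riemann's removable-singularity theorem, once boundedness of $\lambda^{\pm}$ on compacta is noted.

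For the second assertion, Part 1 already yields that each $\lambda_j^i$ is holomorphic on $\Omega$ for every $j$. Because $\phi_j$ is an automorphism, $\lambda_j^{+}\lambda_j^{-}=\det(J\phi_j)$ is nowhere zero, so each $\lambda_j^i$ is itself nowhere vanishing on $\Omega$. Boundedness of $\Omega$ together with Cauchy's estimates bounds $\{J\phi_j\}$ uniformly on compact subsets of $\Omega$, whence $\{T_j\}$, $\{D_j\}$, and $\{\lambda_j^{\pm}\}$ are all normal families. After extracting a convergent subsequence via Montel's theorem, $\lambda_j^i\to\lambda_0^i$ uniformly on compacta with $\lambda_0^i$ holomorphic, and Hurwitz's theorem applied to the nowhere-vanishing sequence $\{\lambda_j^i\}$ on the connected set $\Omega$ forces $\lambda_0^i$ to be either identically zero or nowhere vanishing.

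The principal obstacle is in Part 1: establishing global single-valuedness of $\sqrt{T^2-4D}$ across the coincidence locus $V$. A priori, monodromy around an irreducible component of $V$ could interchange $\lambda^{+}$ with $\lambda^{-}$, leaving only a two-valued object. I expect this to be handled either by a topological argument (using that $T^2-4D$ is a formal square, so any branching along $V$ must be of even order) or by a structural observation about the Jacobian of a biholomorphism ruling out the Jordan-block behaviour responsible for genuine branching. The analogous Hurwitz step for the determinant alone, which is the Cartan statement in \cite{Na95}, offers a template for the final step once the holomorphic splitting of the eigenvalues is in place; the whole argument then reads as a promotion of Cartan's determinantal statement to the eigenvalue level, as advertised in the introduction.
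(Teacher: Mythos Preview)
Your Part~2 is correct and essentially identical to the paper's: nowhere-vanishing of each $\lambda_j^i$ from $\det J\phi_j\neq 0$, local uniform bounds on $\lambda_j^{\pm}$ from Cauchy estimates on the trace and determinant, Montel, then Hurwitz on the connected domain $\Omega$. The paper phrases the boundedness step slightly differently (if $|\lambda_j^1|\to\infty$ on a compactum then the bounded product forces $|\lambda_j^2|\to 0$, contradicting boundedness of the sum), but this is equivalent to your direct bound via $\lambda^{\pm}=(T\pm\sqrt{T^2-4D})/2$.

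Part~1 is where your proposal has the gap you yourself flag, and it is exactly here that the paper takes a different route. You attempt a global holomorphic branch of $\sqrt{T^2-4D}$, but $T^2-4D$ vanishes on the coincidence locus $V$, and nothing in your outline excludes odd-order vanishing along a component of $V$, which would force genuine branch-swapping monodromy. The paper sidesteps this entirely by never taking the square root of the discriminant. Instead it first argues that $(\lambda^1)^2$ extends to a holomorphic function on all of $\Omega$: off $V$ this is immediate (locally $\lambda^1$ coincides with one of the two holomorphic roots), and one crosses $V$ by the removable-singularity theorem, since $V$ is the zero set of the holomorphic function $(T/2)^2-D$ and $(\lambda^1)^2$ is continuous with $(\lambda^1)^2=\lambda^1\lambda^2=D$ on $V$. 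The decisive step is then the one you are not using: because $\phi_0$ is biholomorphic, $\lambda^1\lambda^2=\det J\phi_0$ is nowhere zero, so $\lambda^1$ itself never vanishes, and therefore $(\lambda^1)^2$ is a \emph{nowhere-vanishing} holomorphic function. A continuous square root of a nowhere-vanishing holomorphic function is automatically holomorphic; equivalently, as the paper writes it, $\partial_{\bar z}\bigl((\lambda^1)^2\bigr)=2\lambda^1\,\partial_{\bar z}\lambda^1=0$ forces $\partial_{\bar z}\lambda^1=0$ once $\lambda^1\neq 0$. You already invoke $D\neq 0$ in Part~2; the paper's move is to deploy it in Part~1 as well, replacing the ill-behaved $\sqrt{T^2-4D}$ by the well-behaved $\sqrt{(\lambda^1)^2}$.
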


\begin{proof}
First, we show the eigenvalues $\lambda^i$ depend on $(z,w)$ continuously. Consider $\lambda^i(z,w)$, which satisfies the characteristic equation
\begin{equation}\label{ch}
(\lambda^i(z,w)-a_{11}(z,w))(\lambda^i(z,w)-a_{22}(z,w))-a_{12}(z,w)a_{21}(z,w)=0,
\end{equation}
where the Jacobian is $J\phi=\begin{pmatrix}
a_{11}&a_{12}\\a_{21}&a_{22}
\end{pmatrix}$. Then for any interior point $(z_0,w_0)\in\Omega$, the limit $\displaystyle\lim_{\substack{z\to z_0\\w\to w_0}}\lambda^i(z,w)$ also satisfies the equation (\ref{ch}). By the uniqueness of solutions we can see $\displaystyle\lim_{\substack{z\to z_0\\w\to w_0}}\lambda^i(z,w)=\lambda^i(z_0, w_0)$.

Without loss of generality, we will discuss $\lambda^1$ only. Let us denote $\lbrace (z, w): \lambda^1(z,w)=\dfrac{a_{11}(z,w)+a_{22}(z,w)}{2}\rbrace$ by $E$ and $\Omega\backslash E$ by $F$. By the implicit function theorem, $\lambda^1$ is holomorphic on $F$. We also observe that on $E$, $\lambda^1=\lambda^2$. Moreover, $(\lambda^1)^2$ is holomorphic on $F$, and on $E$, $(\lambda^1)^2=\lambda^1\lambda^2=a_{11}a_{22}-a_{12}a_{21}$, which is also a holomorphic function. It is obvious $E$ is a closed subset of $\Omega$, while $F$ is open. 

If $E$ does not contain any interior point, then by the theorem of removable singularities for several complex variables in \cite{Ra03}, $(\lambda^1)^2$ on $F$ can be extended to $\widetilde{(\lambda^1)^2}$ on $\Omega$ holomorphically because $E$ can be thought of as the zero set of the holomorphic function $(\dfrac{a_{11}+a_{22}}{2})^2-\lambda^1\lambda^2=(\dfrac{a_{11}+a_{22}}{2})^2-(a_{11}a_{22}-a_{12}a_{21})$. Moreover, $(\lambda^1)^2=\widetilde{(\lambda^1)^2}$ on $\Omega$ because $\lambda^1$ is continuous and $E$ has empty interior. If $E$ contains interior points, then for arbitrary $z'\in E^o$, we have $(\lambda^1)^2=\lambda^1\lambda^2=a_{12}a_{21}$, which is again holomorphic. So, $(\lambda^1)^2$ is holomorphic on $F\cup E^o$, and again by the theorem of removable singularities, we have its extension $\widetilde{(\lambda^1)^2}$ on $\Omega$ by continuity and the fact that there is no interior of $\Omega\backslash(F\cup E^o)$, we see $\widetilde{(\lambda^1)^2}=(\lambda^1)^2$.

Now we prove $\lambda^1$ is holomorphic. For this, we take the derivative 
\begin{equation*}
\dfrac{\partial}{\partial\bar{z}}((\lambda^1)^2)=\dfrac{\partial}{\partial\bar{w}}((\lambda^1)^2)=0
\end{equation*}
because of holomorphicity. We obtain
\begin{equation*}
\lambda^1\dfrac{\partial}{\partial\bar{z}}(\lambda^1)=\lambda^1\dfrac{\partial}{\partial\bar{w}}(\lambda^1)=0.
\end{equation*}
However, this implies $\dfrac{\partial}{\partial\bar{z}}(\lambda^1)=\dfrac{\partial}{\partial\bar{w}}(\lambda^1)=0$, because otherwise the determinant of the Jacobian is zero somewhere, which contradicts biholomorphicity.

We prove the second statement by Hurwitz's theorem for $\lambda_j^1$ and $\lambda_j^2$. For the second argument, we can prove it by Hurwitz theorem for $\lambda^1_j$ and $\lambda^2_j$. (Note $\lambda^i_j$ is nowhere zero otherwise, $det(J\phi_j(z,w))$ is zero somewhere which is impossible because $\phi_j$ is automorphism.) Hence, we just need to check the uniform boundedness of $\lambda_j^i$ on arbitrary closed subsets of $\Omega$. Indeed, both of $\lambda^1_j\lambda^2_j=\det(J\phi_j)$ and $\lambda^1_j+\lambda^2_j=\dfrac{\partial f_j}{\partial z}+\dfrac{\partial g_j}{\partial w}$ are uniformly bounded on arbitrary compact subsets by Cauchy estimates ($\Omega$ is bounded). If $\lambda^1_j$ is not uniformly bounded on a compact subset then there is a sequence $\lbrace(z_j,w_j)\rbrace\Subset\Omega$ such that $|\lambda^1_j(z_j, w_j)|\rightarrow\infty$ as $j\rightarrow\infty$. By uniform boundness of $\lambda^1_j\lambda^2_j$ on compact subsets, we can see $|\lambda^2(z_j,w_j)|\rightarrow 0$, but this contradicts the fact that $\lambda^1_j+\lambda^2_j$ is also uniformly bounded on compact subsets. 
\end{proof}

In $\mathbb{C}^2$ the discussion above tells us only two cases can happen given a noncompact automorphism group:
\begin{enumerate}
\item (Orbit Accumulation Point Case) The image of $\phi_0=\lim_{j\to\infty}\phi_j$ contains just one point after passing to subsequences. Both of eigenvalue functions $\lambda^i_j$ of $J\phi_j$ approach to $0$.
\item (Orbit Accumulation Variety Case) The image of $\phi_0=\lim_{j\to\infty}\phi_j$ contains a (regular) one-dimensional complex variety passing to subsequences. Only one of eigenvalue functions $\lambda^i_j$ of $J\phi_j$ approaches to $0$.
\end{enumerate}

The next proposition, which is obtained from the previous discussion, is a generalization of Cartan's theorem in \cite{Na95} (Cartan's theorem states that the limit of automorphisms $\phi_j$ of a bounded domain $\Omega$ is still an automorphism if and only if the determinant of $J\phi_j$ does not converge to $0$). 

\begin{proposition}
Let $\Omega$ be a bounded domain in $\mathbb{C}^2$. Assume there is a family of automorphism $\phi_j=(f_j, g_j)$. Then the limit of $\phi_j$ (in the compact-open topology) is still an automorphism if and only if neither of the eigenvalues $\lambda_j^i$ of $J\phi_j$ converges to $0$ in the topology of uniform convergence on compact subsets.
\end{proposition}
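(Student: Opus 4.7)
The plan is to reduce the equivalence to the classical Cartan theorem for $\det(J\phi_j)$ by way of the eigenvalue machinery set up in Lemma \ref{st}. The core claim to establish is
\[
\det(J\phi_j) \to 0 \text{ uniformly on compacts} \iff \text{at least one } \lambda_j^i \to 0 \text{ uniformly on compacts},
\]
after which the statement is simply the contrapositive of the classical Cartan equivalence $\phi_0 \in \operatorname{Aut}(\Omega) \iff \det(J\phi_j) \not\to 0$.

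One direction is essentially free. The proof of Lemma \ref{st} has already observed, via Cauchy estimates on the bounded domain $\Omega$, that both eigenvalue functions $\lambda_j^i$ are uniformly bounded on every compact subset. Hence if either $\lambda_j^i \to 0$ uniformly on compacts, the product $\lambda_j^1 \lambda_j^2 = \det(J\phi_j)$ does so as well, and classical Cartan yields $\phi_0 \notin \operatorname{Aut}(\Omega)$.

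For the reverse direction I would assume $\det(J\phi_j) \to 0$ uniformly on compacts, so that classical Cartan gives $\phi_0 := \lim \phi_j \notin \operatorname{Aut}(\Omega)$. A standard companion to Cartan's theorem (also available in \cite{Na95}) forces $\phi_0(\Omega) \subset \partial\Omega$, so for any interior $q$ the convergence $\phi_j(q) \to \phi_0(q) \in \partial\Omega$ supplies the orbit-accumulation hypothesis of Lemma \ref{st}. Now I argue by contradiction: if neither $\lambda_j^i \to 0$ uniformly on compacts, then in particular the ``smaller'' eigenvalue $\lambda_j^2$ (under the ordering $\succ$) stays bounded away from $0$ at some point of some compact along a subsequence. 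Passing to a further subsequence using normal families, Lemma \ref{st} together with Hurwitz forces the limit of $\lambda_j^2$ to be nowhere vanishing (since it cannot be identically zero); by the ordering $|\lambda_j^1| \geq |\lambda_j^2|$ the limit of $\lambda_j^1$ is also nowhere vanishing. Consequently $\det(J\phi_j)$ converges along this subsequence to a nowhere-vanishing holomorphic function, contradicting $\det(J\phi_j) \to 0$.

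The only step requiring real care is the subsequence extraction in the reverse direction, which leans on the ordering $\succ$ to translate ``neither eigenvalue tends to $0$ uniformly'' into a pointwise lower bound for the smaller eigenvalue, and then invokes the Hurwitz-type dichotomy of Lemma \ref{st} to promote this to a nowhere-vanishing limit. Everything else is a one-line consequence of Cauchy estimates on the bounded domain and of classical Cartan.
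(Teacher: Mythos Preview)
Your proposal is correct and follows essentially the same route the paper intends: the paper offers no separate proof for this proposition, stating only that it is ``obtained from the previous discussion,'' i.e., from Lemma~\ref{st} (holomorphicity and the Hurwitz dichotomy for the eigenvalue functions, together with their local uniform boundedness via Cauchy estimates) combined with the classical Cartan theorem for $\det(J\phi_j)$. Your argument simply makes this derivation explicit, including the contrapositive reduction and the use of the ordering $\succ$ to control both eigenvalues once the smaller one is shown to have a nowhere-vanishing limit along a subsequence.
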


\begin{remark}\label{nr}
The readers should be warned, in general, that the following imitation of Cartan's theorem is not true: Given a family of automorphism groups $\phi_j$ so that $\phi_j(q)\rightarrow p$ as $j\rightarrow\infty$ where $q\in\Omega$ but $p\in\partial\Omega$, then $\| J\phi_j(q)\|_{op}\rightarrow 0$. Please see the following counterexample. However, the reader can easily show that, under the assumption that the automorphism sequence is in the orbit accumulation point case (not the  orbit accumulation variety case), $\|\phi_j(q)\|_{op}\rightarrow 0$ still holds, for an arbitrary interior point $q\in\Omega$.
\end{remark}

\begin{example}
Let $\Omega$ be the bidisc with radius $(1,1)$, centered at the origin, and $\phi_j(z,w)=(z, \dfrac{w-\alpha_j}{1-\bar{\alpha_j}w})$, where $\alpha_j\rightarrow 1$ as $j\rightarrow\infty$. The Jacobian of each automorphism has an entry of $1$, so $\| J\phi_j(q)\|_{op}\not\rightarrow 0$. 
\end{example}

\begin{proposition}\label{uni}
Let $\phi_j$ be as in Proposition \ref{st}. Then $\det((J\phi_j(q))^{-1}J\phi_j(z,w))$ is locally uniformly bounded below and above by positive number.
\end{proposition}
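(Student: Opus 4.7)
The plan is to recognize the quantity as the Jacobian determinant of the normalized scaling map. Set
\[
h_j(z,w):=\det\bigl((J\phi_j(q))^{-1}J\phi_j(z,w)\bigr)=\frac{\det J\phi_j(z,w)}{\det J\phi_j(q)}.
\]
Since $\phi_j$ is biholomorphic, $h_j$ is nowhere-vanishing and holomorphic on $\Omega$ with $h_j(q)=1$; concretely, $h_j=\det J\Phi_j$ for the scaled map $\Phi_j(z,w):=(J\phi_j(q))^{-1}(\phi_j(z,w)-\phi_j(q))$, which is a biholomorphism from $\Omega$ onto $\Phi_j(\Omega)$ satisfying $\Phi_j(q)=0$ and $J\Phi_j(q)=I$. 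The task is therefore to produce positive constants $c(K),C(K)$, depending only on a given compact $K\subset\Omega$, so that $c(K)\le|h_j(z,w)|\le C(K)$ uniformly in $j$ for $(z,w)\in K$.

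My approach is via the Bergman-kernel transformation law. Since $\Omega$ is bounded, its on-diagonal Bergman kernel $K_\Omega$ is a continuous strictly positive function on $\Omega$, and for every $\phi_j\in\mathrm{Aut}(\Omega)$ and every $\zeta\in\Omega$ one has $|\det J\phi_j(\zeta)|^2\,K_\Omega(\phi_j(\zeta))=K_\Omega(\zeta)$. Dividing this identity at $\zeta=(z,w)$ by the identity at $\zeta=q$ gives
\[
|h_j(z,w)|^2=\frac{K_\Omega(z,w)}{K_\Omega(q)}\cdot\frac{K_\Omega(\phi_j(q))}{K_\Omega(\phi_j(z,w))}.
\]
The first factor is independent of $j$, continuous and strictly positive on $\Omega$, hence locally uniformly bounded above and below on any compact $K\subset\Omega$.

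The hard part will be to control the second factor. For this I would invoke the automorphism invariance of the Bergman distance: $d^{B}_{\Omega}(\phi_j(z,w),\phi_j(q))=d^{B}_{\Omega}((z,w),q)$, and the latter is uniformly bounded by some $R=R(K)<\infty$ as $(z,w)$ ranges over $K$. The main obstacle is then a comparison estimate asserting that on a bounded domain, $\log K_\Omega$ is Lipschitz with respect to the Bergman metric, so that any two points at Bergman distance at most $R$ have $K_\Omega$-values comparable up to a multiplicative constant depending only on $R$ and $\Omega$; this follows from the reproducing and extremal properties of $K_\Omega$ by integrating $d\log K_\Omega$ along a Bergman geodesic joining $\phi_j(q)$ to $\phi_j(z,w)$. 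Granting this estimate, the second factor is locally uniformly bounded above and below, and combining the two bounds yields the desired constants $c(K)\le|h_j(z,w)|\le C(K)$ that are independent of $j$.
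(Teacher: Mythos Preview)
Your approach is genuinely different from the paper's, and the setup via the Bergman transformation law is elegant: the identity
\[
|h_j(z,w)|^2=\frac{K_\Omega(z,w)}{K_\Omega(q)}\cdot\frac{K_\Omega(\phi_j(q))}{K_\Omega(\phi_j(z,w))}
\]
together with the isometry of the Bergman distance reduces everything to the single claim that $\log K_\Omega$ is Lipschitz for the Bergman metric, uniformly on all of $\Omega$. The difficulty is that this claim is the entire content of the proposition, and you have not proved it. Your sketch (``integrate $d\log K_\Omega$ along a Bergman geodesic'') presupposes both the existence of minimizing geodesics (which requires completeness of the Bergman metric, not assumed here) and a pointwise bound $|d\log K_\Omega|_{B}\le C$. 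The latter is \emph{not} a formal consequence of the reproducing/extremal description: writing $a_\alpha=e_\alpha(z)$, $b_\alpha=\partial_v e_\alpha(z)$ for an orthonormal basis, one has $|\partial_v\log K|^2=|\langle b,a\rangle|^2/\|a\|^4$ and $\beta_\Omega(z;v)=\bigl(\|a\|^2\|b\|^2-|\langle b,a\rangle|^2\bigr)/\|a\|^4$, so the desired inequality is a \emph{strict, quantitative} improvement of Cauchy--Schwarz between the evaluation and derivative functionals, which does not come for free. Since in the present setting both $\phi_j(q)$ and $\phi_j(z,w)$ run to $\partial\Omega$, you really need a uniform estimate there; absent a reference or a proof, this is a gap.

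By contrast, the paper's argument is completely elementary and avoids Bergman theory. On a fixed compact $\bar D\ni q$ it picks, for each $j$, a point $q_j\in\bar D$ where $|\det J\phi_j|$ is maximal; then $\det\bigl((J\phi_j(q_j))^{-1}J\phi_j(z,w)\bigr)$ is a sequence of nowhere-vanishing holomorphic functions bounded by $1$ on $\bar D$, hence normal, and by Hurwitz its limit $c_0(z,w)$ is nowhere zero (it equals $1$ at $q_j$). Evaluating at $q$ gives $\det\bigl((J\phi_j(q_j))^{-1}J\phi_j(q)\bigr)\to c_0(q)\neq 0$, and dividing yields $\det\bigl((J\phi_j(q))^{-1}J\phi_j(z,w)\bigr)\to c_0(z,w)/c_0(q)$, nowhere zero on $\bar D$; the locally uniform positive upper and lower bounds follow. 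What this buys is that no auxiliary metric or kernel estimate is needed---just Montel and Hurwitz. Your route, if the Lipschitz lemma could be supplied (it is true on balls and bounded symmetric domains, for instance, with constant $n+1$), would give a more conceptual explanation tying the bound to Bergman geometry, but as written it trades an elementary argument for an unproved analytic estimate.
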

\begin{proof}
Let $\bar{D}\subset\Omega$ be a closed neighborhood of an interior point $q\in\Omega$. Consider $\det(J\phi_j(z,w))$, which is bounded for each $j$, so there is a $q_j\in\bar{D}$ such that 
\begin{equation*}
|\det(J\phi_j(q_j))|=\max_{(z,w)\in\bar{D}}|\det(J\phi_j(z,w))|.
\end{equation*}
Hence $\det((J\phi_j(q_j))^{-1}J\phi_j(z,w))$ is a normal family because it is uniformly bounded by $1$. Again, by Hurwitz's theorem, $\det((J\phi_j(q_j))^{-1}J\phi_j(z,w))$ converges uniformly on $\bar{D}$ to a nowhere zero holomorphic function $c_0(z,w)$, because otherwise, the sequence $\det((J\phi_j(q_j))^{-1}J\phi_j(q_j))$ approaches $0$, which is impossible. Specifically, the sequence $\det((J\phi_j(q_j))^{-1}J\phi_j(q))$ approaches a nonzero number $c_0$ and thus $\det(J\phi_j(q_j)(J\phi_j(q))^{-1})$ approaches $\dfrac{1}{c_0}$. We observes that $\det((J\phi_j(q))^{-1}J\phi_j(z,w))$ approaches a nowhere zero holomorphic function $c(z,w)=\dfrac{c_0(z,w)}{c_0}$, which completes the proof.
\end{proof}

With a similar proof to that of Proposition \ref{uni}, we can show the following
\begin{proposition}
$\dfrac{\lambda_j^i(z,w)}{\lambda_j^i(q)}$ is locally uniformly bounded below and above by positive constants for $i=1,2$.
\end{proposition}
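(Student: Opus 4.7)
The plan is to mirror almost verbatim the proof of Proposition \ref{uni}, with $\det(J\phi_j)$ replaced by the individual eigenvalue functions $\lambda_j^i$. The ingredients required are all in place: by Lemma \ref{st}, each $\lambda_j^i$ is a holomorphic function on $\Omega$ which is locally uniformly bounded in $j$; and since $\phi_j$ is an automorphism, $\det(J\phi_j)=\lambda_j^1\lambda_j^2$ is nowhere zero, so neither $\lambda_j^1$ nor $\lambda_j^2$ ever vanishes on $\Omega$. This last fact is what will allow us to apply Hurwitz's theorem to a suitably normalized sequence.

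Fix $i\in\{1,2\}$ and a closed polydisc $\bar{D}\subset\Omega$ centered at $q$. By Lemma \ref{st}, the family $\{\lambda_j^i\}$ is uniformly bounded on $\bar{D}$, so for each $j$ we may pick $q_j^i\in\bar{D}$ with $|\lambda_j^i(q_j^i)|=\max_{(z,w)\in\bar{D}}|\lambda_j^i(z,w)|$. Passing to a subsequence, $q_j^i\to q_*^i\in\bar{D}$. Consider the normalized sequence
\begin{equation*}
\mu_j^i(z,w):=\frac{\lambda_j^i(z,w)}{\lambda_j^i(q_j^i)},
\end{equation*}
which consists of holomorphic functions on $\bar{D}$ of modulus at most $1$, so by Montel's theorem a subsequence converges uniformly on $\bar{D}$ to a holomorphic limit $h_0^i$. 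Because $\mu_j^i(q_j^i)=1$ for every $j$, uniform convergence together with $q_j^i\to q_*^i$ yields $h_0^i(q_*^i)=1$, hence $h_0^i\not\equiv 0$. Each $\mu_j^i$ is nowhere zero on $\bar{D}$, so Hurwitz's theorem forces $h_0^i$ to be nowhere zero on the interior of $D$.

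To conclude, observe the identity
\begin{equation*}
\frac{\lambda_j^i(z,w)}{\lambda_j^i(q)}=\frac{\mu_j^i(z,w)}{\mu_j^i(q)},
\end{equation*}
and note $\mu_j^i(q)\to h_0^i(q)\neq 0$. Thus the left-hand side converges uniformly on any compact neighborhood of $q$ inside the interior of $D$ to the nowhere zero holomorphic function $h_0^i(z,w)/h_0^i(q)$, and is therefore uniformly bounded above and uniformly bounded away from zero for all sufficiently large $j$; the finitely many earlier terms are nowhere zero on $\bar{D}$ and can be absorbed into the bounds. The main (and only) subtle point is the application of Hurwitz — for this one must know that each $\lambda_j^i$ has no zeros on $\Omega$, which is the content of Lemma \ref{st} combined with the fact that $\phi_j$ is an automorphism.
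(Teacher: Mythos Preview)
Your proof is correct and follows exactly the approach the paper intends: the paper does not spell out a proof but simply says ``with a similar proof to that of Proposition \ref{uni},'' and what you have written is precisely that mirroring, with $\det(J\phi_j)$ replaced by the nowhere-vanishing holomorphic eigenvalue functions $\lambda_j^i$ supplied by Lemma \ref{st}. Your write-up is in fact more careful than the paper's template (you track the subsequence $q_j^i\to q_*^i$ explicitly to verify $h_0^i\not\equiv 0$ before invoking Hurwitz), but the strategy is identical.
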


\section{The proof of Theorem \ref{main}}\label{sec2}

For information on the Hausdorff metric, we refer readers to a nice survey \cite{He99}.

Let $H$ be the regular biholomorphic mapping from $\mathbb{C}^2$ into $\mathbb{CP}^2$ (endowed with the Fubini-Study metric), that maps $(z,w)\in\mathbb{C}^2$ to $[1,z,w]\in\mathbb{CP}^2$. Suppose we have a family of open subsets $\Omega_j\subset\mathbb{C}^2$ such that $\cap_j \Omega_j\neq\varnothing$. We will define the open subset $\widehat{\Omega}$ of $\mathbb{C}^2$ as the limit of the given family of open subsets $\lbrace\Omega_j\rbrace$ in $\mathbb{C}^2$. Since $\mathbb{CP}^2$ is a complex manifold with finite diameter, the closure $\overline{H(\Omega_j)}$ is a Cauchy sequence with Hausdorff metric on nonempty closed bounded subsets of $\mathbb{CP}^2$, because the total boundedness of $\mathbb{CP}^2$ implies the total boundedness of the Hausdorff metric. Moreover, due to the completeness of $\mathbb{CP}^2$, the Cauchy sequence $\overline{H(\Omega_j)}$ has a limit which is also a nonempty closed bounded subset of $\mathbb{CP}^2$, and we denote it by $\widetilde{\Omega}$. Please note $\widetilde{\Omega}$ is closed, and we define $\widehat{\Omega}$ to be the subset of interior points of $\widetilde{\Omega}$.

The next lemma can be considered a generalized open mapping theorem in $\mathbb{C}^2$. For higher dimensions, a similar result also holds and is not hard to be formulated and proved.

\begin{lemma}[generalized open mapping theorem]\label{gomp}
Let $\phi$ be a holomorphic map defined on $\Omega\subset\mathbb{C}^2$, and assume the determinant of the Jacobian of $\phi$ is nowhere vanishing. Then $\phi(\Omega)$ is open.
\end{lemma}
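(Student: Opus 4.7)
The plan is to reduce the statement to the holomorphic inverse function theorem, so the argument is essentially one line. Fix an arbitrary $q_0 \in \phi(\Omega)$ and choose a preimage $p_0 \in \Omega$ with $\phi(p_0) = q_0$. The hypothesis $\det J\phi(p_0) \neq 0$ says that $J\phi(p_0) : \mathbb{C}^2 \to \mathbb{C}^2$ is a $\mathbb{C}$-linearly invertible map. By the holomorphic inverse function theorem, there exist open neighborhoods $U \subset \Omega$ of $p_0$ and $V \subset \mathbb{C}^2$ of $q_0$ such that $\phi|_U : U \to V$ is a biholomorphism. In particular $V \subset \phi(\Omega)$, so $\phi(\Omega)$ contains an open neighborhood of each of its points and is therefore open.

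The only substantive ingredient is the holomorphic inverse function theorem itself, which I would cite rather than prove. For completeness, its proof runs by applying the real smooth inverse function theorem to $\phi$ regarded as a map $\mathbb{R}^4 \to \mathbb{R}^4$, which is legitimate because the real Jacobian determinant equals $|\det J\phi(p_0)|^2 > 0$, and then verifying that the resulting smooth local inverse is itself holomorphic by differentiating $\phi^{-1} \circ \phi = \mathrm{id}$ with respect to $\partial/\partial \bar z$ and $\partial/\partial \bar w$ and invoking the $\mathbb{C}$-linear invertibility of $J\phi(p_0)$ to conclude that the antiholomorphic derivatives of $\phi^{-1}$ vanish.

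There is no genuine obstacle here: openness is a purely local property, the nonvanishing of the Jacobian is assumed globally on $\Omega$, and the inverse function theorem applies at every point without any further hypothesis on $\Omega$ (e.g.\ boundedness or connectedness). An alternative route would be to use degree theory in $\mathbb{R}^4$, or to slice $\phi$ by generic complex lines and invoke the one-variable open mapping theorem after a local straightening, but both approaches are strictly more cumbersome than the direct inverse-function-theorem argument and offer no additional insight in this setting.
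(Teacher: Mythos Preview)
Your proof is correct and follows essentially the same approach as the paper: both arguments reduce immediately to the holomorphic inverse function theorem applied pointwise, with the paper phrasing it as a union of open images $\phi(U_q)$ over a cover of $\Omega$, and you phrasing it as finding an open neighborhood of each point of $\phi(\Omega)$. These are equivalent formulations of the same one-line argument.
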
 
\begin{proof}
For any $q\in\Omega$, the determinant of the Jacobian $J\phi(q)$ is not zero. So by the inverse function theorem, there exists an open neighborhood $U_q$ of $q$ such that $\phi(U_q)$ is open. Moreover, for a cover $\lbrace U_q\rbrace$ where $q$ varies through $\Omega$, we have $\displaystyle\phi(\Omega)=\phi(\bigcup_q U_q)=\bigcup_q\phi(U_q)$ is open. 
\end{proof}

The following proposition is a generalization of the well-known scaling method of \cite{Fr86}. 

\begin{proposition}\label{s1p}
Let $\Omega$ be a bounded domain in $\mathbb{C}^2$. Assume there is a family of automorphisms $\phi_j=(f_j, g_j)$ and an interior point $q\in\Omega$ such that $\phi_j(q)\rightarrow p$ where $p\in\partial\Omega$. Let $\Omega$ be locally convex around $p=(0,0)$. Let $A_j(z,w)=(J\phi_j(q))^{-1}(\phi_j(z,w)-\phi_j(q))$. Then $A_j$ converges uniformly on compact subsets to a biholomorphism between $\Omega$ and $\widehat{\Omega}$, where $\widehat{\Omega}$ is the limit of $A_j(\Omega)$.
\end{proposition}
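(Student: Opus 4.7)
The plan is to adapt Frankel's scaling argument from \cite{Fr86}, with local convexity at $p=(0,0)$ replacing Frankel's global convexity hypothesis. The first and most delicate step is to show that $\{A_j\}$ is a normal family on $\Omega$. The normalizations $A_j(q)=0$ and $JA_j(q)=I$ pin the maps at $q$, and Proposition \ref{uni} makes $\det JA_j$ locally uniformly bounded above and below. To upgrade these into a uniform bound on $|A_j|$ over a compact $K\Subset\Omega$, I would exploit local convexity at $p$: on a convex neighborhood $U$ of $p$ there is a family of real-affine supporting functions for $\Omega\cap U$ vanishing at boundary points near $p$, and since $\phi_j(K)\to p$, the images $\phi_j(K)$ eventually sit in $U$ and can be compared to this supporting structure. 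Pulling the supporting inequalities back through $(J\phi_j(q))^{-1}$, and using that $\det((J\phi_j(q))^{-1}J\phi_j)$ is controlled, should yield the desired uniform bound on $A_j$, after which normality follows from Montel.

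With a subsequential limit $A_0=\lim_k A_{j_k}$ in hand, I would next show that $A_0$ is a biholomorphism onto its image. Holomorphicity is immediate; $\det JA_0$ is nowhere zero on $\Omega$ by Hurwitz together with the uniform lower bound from Proposition \ref{uni} (the alternative degenerate limit is excluded by $JA_0(q)=I$); Lemma \ref{gomp} then gives that $A_0(\Omega)$ is open. For injectivity I would invoke the Hurwitz-type theorem for injective holomorphic maps in several complex variables: each $A_j$ is injective as a composition of an automorphism with an affine isomorphism, and a uniform-on-compacta limit of injective holomorphic maps in $\mathbb{C}^2$ is either injective or degenerate, with the non-degeneracy of $A_0$ at $q$ ruling out the latter.

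Finally I would identify $A_0(\Omega)$ with $\widehat{\Omega}$ by a two-way inclusion: uniform convergence of $A_j$ on compact subsets of $\Omega$ gives $A_0(\Omega)\subset\widehat{\Omega}$, while openness of $A_0(\Omega)$ combined with the definition of $\widehat{\Omega}$ as the interior of the Hausdorff limit of $\overline{H(A_j(\Omega))}$ yields the reverse inclusion $\widehat{\Omega}\subset A_0(\Omega)$. The main obstacle throughout is the very first step: without convexity, the matrix $(J\phi_j(q))^{-1}$ can stretch $\Omega$ along directions in which $\partial\Omega$ is concave near $p$, destroying any hope of a uniform upper bound on $A_j$ over compacts of $\Omega$. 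This is precisely the phenomenon anticipated in Remark \ref{nor}, and is the reason local convexity at $p$ is imposed as a hypothesis.
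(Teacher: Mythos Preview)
Your overall plan matches the paper's structure, but the crucial first step---normality of $\{A_j\}$---is not actually carried out, and the mechanism you suggest does not do the job. Controlling $\det\bigl((J\phi_j(q))^{-1}J\phi_j\bigr)$ via Proposition~\ref{uni} says nothing about the operator norm $\|(J\phi_j(q))^{-1}J\phi_j\|_{op}$: one eigenvalue could blow up while the other shrinks with the product bounded. Vaguely invoking supporting hyperplanes and ``pulling inequalities back through $(J\phi_j(q))^{-1}$'' does not close this gap, because the affine supporting data for $\Omega\cap U$ live in the target, and after multiplying by $(J\phi_j(q))^{-1}$ you obtain affine half-spaces whose normals you have no a~priori control over.

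The paper's argument (following Frankel) uses local convexity in a quite different way. Convexity near $p$ makes the \emph{midpoint map}
\[
F_j(x,y)=\phi_j^{-1}\Bigl(\tfrac{1}{2}\bigl(\phi_j(x)+\phi_j(y)\bigr)\Bigr)
\]
well-defined from $D\times D$ into $\Omega$ once $\phi_j(D)$ lies in a convex neighborhood of $p$. Writing $\omega_j=(J\phi_j(q))^{-1}\phi_j$, the functional equation $2\,\omega_j\bigl(F_j(x,y)\bigr)=\omega_j(x)+\omega_j(y)$ is differentiated twice in $x$ at $y=x$ to obtain $\|\nabla J\omega_j\|_D\le C\,\|J\omega_j\|_D$, with $C$ coming from a uniform $C^2$ bound on $F_j$ (available because $F_j$ maps into the bounded set $\Omega$). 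Since $J\omega_j(q)=\mathrm{Id}$, an ODE comparison gives $\|J\omega_j\|_D\le C'$ uniformly in $j$; the mean value inequality with $A_j(q)=0$ then yields normality. This Gronwall-type step is the substantive use of convexity, and it is what your sketch is missing.

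Your remaining steps are close to the paper's. For injectivity the paper gives a direct $\varepsilon$-argument using the inverses $\Phi_j^{-1}$ and their limit rather than the several-variable Hurwitz theorem you cite; either route works. The identification $A_0(\Omega)=\widehat{\Omega}$ is argued in the paper by the same two-way inclusion you outline.
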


Before the proof of Proposition \ref{s1p}, we need several lemmas.

\begin{lemma}
If $\|(J\phi_j(q))^{-1}J\phi_j(z,w)\|_{op}$ is uniformly bounded on arbitrary compact subsets of $\Omega$, then $A_j$ converges uniformly on each compact subset in $\mathbb{CP}^2$.
\end{lemma}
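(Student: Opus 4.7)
The plan is to reduce the lemma to Montel's normal family theorem applied to the holomorphic maps $A_j$. The first observation is that $A_j(q) = (J\phi_j(q))^{-1}(\phi_j(q) - \phi_j(q)) = 0$ for every $j$, so all of the maps $A_j$ share a common value at the interior point $q$. Moreover, the complex Jacobian of $A_j$ equals $JA_j(z,w) = (J\phi_j(q))^{-1} J\phi_j(z,w)$, so the standing hypothesis is precisely that $\|JA_j\|_{op}$ is uniformly bounded in $j$ on each compact subset of $\Omega$.

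Next, I would upgrade this derivative bound to a uniform sup bound on $A_j$. Given any compact $K \subset \Omega$, enlarge $K$ to a path-connected compact set $K' \subset \Omega$ containing $q$, and fix a relatively compact open neighborhood $U$ with $K' \subset U \Subset \Omega$. Let $M_U$ be the uniform bound on $\|JA_j\|_{op}$ on $U$. For each $(z,w) \in K'$, choose a piecewise smooth path $\gamma$ from $q$ to $(z,w)$ lying in $U$ whose length is bounded by some $L$ depending only on $K'$ and $U$. Integrating $dA_j$ along $\gamma$ and using $A_j(q) = 0$ yields $\|A_j(z,w)\| \leq M_U L$ uniformly in $j$ and in $(z,w) \in K'$. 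Hence $\{A_j\}$ is a uniformly bounded family of holomorphic maps on compact subsets of $\Omega$.

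Montel's theorem for holomorphic maps from $\Omega$ to $\mathbb{C}^2$ then extracts a subsequence that converges locally uniformly on $\Omega$, which by the paper's convention we relabel as $\{A_j\}$ itself. Finally, since the embedding $H: \mathbb{C}^2 \to \mathbb{CP}^2$ is continuous and the values $A_j(K)$ are contained in a fixed bounded (hence precompact) subset of $\mathbb{C}^2$, uniform Euclidean convergence on $K$ automatically transfers to uniform convergence of $H \circ A_j$ in the Fubini-Study metric on $\mathbb{CP}^2$; this is the sense of convergence relevant for the subsequent Hausdorff-limit construction of $\widehat{\Omega}$.

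The substantive content is the path-integration upgrading the Jacobian bound to a uniform sup bound, which is routine. The one subtlety to keep in mind is that convergence is to be understood up to a subsequence, as the hypothesis alone does not rule out multiple cluster points in the compact-open topology, but this matches the convention announced at the start of the paper.
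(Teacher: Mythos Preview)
Your proposal is correct and follows essentially the same route as the paper: bound $\|A_j(z,w)-A_j(q)\|$ by integrating the Jacobian estimate along a path in a compact set, then invoke Montel. The only cosmetic difference is that the paper phrases the argument via a ``generalized normal'' dichotomy and a coordinate chart in $\mathbb{CP}^2$ around $v=\lim_j A_j(q)$, whereas you use directly that $A_j(q)=0$; since indeed $A_j(q)\equiv 0$ by definition, your formulation is the cleaner one and the paper's extra generality is not actually needed here.
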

\begin{proof}
For arbitrary $q\in\Omega$ we now prove that $A_j$ is generalized normal, where a family of functions is called generalized normal if $A_j$ is normal or $\|A_j(z,w)\|$ converges to $\infty$ everywhere. 

Fix $q\in\Omega$ and an arbitrary connected compact subset $D\Subset\Omega$. For any $q'\in D$ we can always find a curve $\gamma(t)$ parameterized by length so that $\gamma(0)=q$ and $\gamma(1)=q'$. We consider $\|A_j(q')-A_j(q)\|\leq M_j\leq K$ by the mean value theorem for vector-valued functions, where $M_j$ is the uniform bound of $\|(J\phi_j(q))^{-1}J\phi_j(z,w)\|_{op} $ for $D$. Thus, we consider the coordinate chart $(\psi, U)$ such that $v=(0,0)$, where $v=\lim_{j\to\infty}A_j(q)\in\mathbb{CP}^2$. Since we have $\|A_j(z)-A_j(q)\|<K$ for any $z\in D$, $D$ will be mapped uniformly into a bounded neighborhood of $v\in\mathbb{CP}^2$, which finishes the proof by Montel's theorem.
\end{proof}

\begin{lemma}\label{bi}
If $A_j$ defined above is normal and $\Phi$ is the limit of $A_j$, then $\Phi$ is a biholomorphism from $\Omega$ onto $\widehat{\Omega}$.
\end{lemma}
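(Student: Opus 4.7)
The plan is to establish biholomorphicity by verifying, in sequence: holomorphicity, non-degeneracy of $J\Phi$, global injectivity, and surjectivity onto $\widehat{\Omega}$. Since $A_j\to\Phi$ uniformly on compact subsets of $\Omega$, the limit $\Phi$ is holomorphic. A direct computation gives $JA_j(q)=(J\phi_j(q))^{-1}J\phi_j(q)=I$, so $J\Phi(q)=I$; in particular $\det J\Phi(q)=1\neq 0$. By Proposition \ref{uni}, $\det JA_j$ is locally uniformly bounded above and below away from $0$ on $\Omega$, so Hurwitz's theorem applied to the determinants shows $\det J\Phi$ is nowhere vanishing on $\Omega$. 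Lemma \ref{gomp} then gives that $\Phi(\Omega)$ is open in $\mathbb{C}^2$.

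For injectivity: each $A_j$ is injective on $\Omega$, being the composition of the biholomorphism $\phi_j$ with an invertible affine map. I would invoke the several-complex-variables analogue of Hurwitz (see e.g.\ Narasimhan's \emph{Several Complex Variables}): a locally uniform limit of injective holomorphic maps from $\Omega\subset\mathbb{C}^n$ into $\mathbb{C}^n$ is either injective or has identically vanishing Jacobian determinant. Since $\det J\Phi\not\equiv 0$, $\Phi$ is injective.

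For the forward inclusion $\Phi(\Omega)\subset\widehat{\Omega}$: given any compact $K\subset\Omega$, uniform convergence $A_j|_K\to\Phi|_K$ gives $H(A_j(K))\to H(\Phi(K))$ in the Hausdorff metric on $\mathbb{CP}^2$; combining with $H(A_j(\Omega))\to\widetilde{\Omega}$ yields $\Phi(K)\subset\widetilde{\Omega}$. Exhausting $\Omega$ and using that $\Phi(\Omega)$ is open then gives $\Phi(\Omega)\subset\widehat{\Omega}$. For the reverse inclusion, I consider the inverse maps $A_j^{-1}\colon A_j(\Omega)\to\Omega$. For any compact $K\subset\widehat{\Omega}$, Hausdorff convergence ensures $K\subset A_j(\Omega)$ for all sufficiently large $j$; boundedness of $\Omega$ then makes $\{A_j^{-1}\}$ uniformly bounded on $K$, hence normal. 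A diagonal argument along an exhaustion of $\widehat{\Omega}$ yields a holomorphic $\Psi\colon\widehat{\Omega}\to\overline{\Omega}$. The identities $A_j^{-1}(0)=q$ and $JA_j^{-1}(0)=I$ give $\Psi(0)=q\in\Omega$ and $J\Psi(0)=I$, so by Hurwitz $\det J\Psi$ is nowhere zero, and Lemma \ref{gomp} makes $\Psi(\widehat{\Omega})$ an open subset of $\overline{\Omega}$. Since the bounded domain $\Omega$ satisfies $\Omega=\mathrm{int}(\overline{\Omega})$, this forces $\Psi(\widehat{\Omega})\subset\Omega$, and passing to the limit in $A_j\circ A_j^{-1}=\mathrm{id}$ yields $\Phi\circ\Psi=\mathrm{id}_{\widehat{\Omega}}$, so $\widehat{\Omega}\subset\Phi(\Omega)$.

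The main obstacle is the last step: ruling out that the normal-family limit $\Psi$ sends some point of $\widehat{\Omega}$ to $\partial\Omega$. This is handled by combining the openness of $\Psi(\widehat{\Omega})$ (from non-vanishing Jacobian via Lemma \ref{gomp}) with the regularity of $\Omega$ as a genuine domain. Once $\Phi\circ\Psi=\mathrm{id}$ is in hand, biholomorphicity follows from the previous steps.
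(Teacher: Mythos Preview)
Your injectivity argument via the several-variables Hurwitz theorem is correct and in fact cleaner than the paper's: the paper instead proves injectivity by a telescoping identity
\[
z'-z''=A_j^{-1}(A_j(z'))-\Psi(A_j(z'))+\Psi(A_j(z'))-\Psi(A_j(z''))+\Psi(A_j(z''))-A_j^{-1}(A_j(z'')),
\]
where $\Psi$ is a limit of the inverses, and lets each block tend to zero. Your route avoids having to make sense of $\Psi$ at this stage.

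There is, however, a genuine gap in your reverse inclusion $\widehat{\Omega}\subset\Phi(\Omega)$. You assert that Hausdorff convergence $\overline{H(A_j(\Omega))}\to\widetilde{\Omega}$ forces every compact $K\subset\widehat{\Omega}=\mathrm{int}(\widetilde{\Omega})$ to lie in $A_j(\Omega)$ for all large $j$. This does not follow from Hausdorff convergence of closures: the annuli $\Omega_j=\{1/j<|z|<1\}$ have closures converging in the Hausdorff metric to $\overline{\mathbb{D}}$, yet $\{0\}\subset\mathbb{D}$ lies in no $\Omega_j$. What you actually need is that $\widehat{\Omega}$ be contained in the Carath\'eodory kernel of the sequence $\{A_j(\Omega)\}$, and that requires an extra ingredient (in Frankel's original setting, global convexity of the scaled domains does the job). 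Without it, $A_j^{-1}$ need not be defined on $K$, your $\Psi$ is not constructed, and the identity $\Phi\circ\Psi=\mathrm{id}_{\widehat{\Omega}}$ is unavailable. A smaller point in the same passage: the assertion $\Omega=\mathrm{int}(\overline{\Omega})$ fails for general bounded domains (e.g.\ a punctured ball); it holds under the paper's boundary-regularity hypotheses, but should be flagged as such rather than asserted. For comparison, the paper's own proof of $\widehat{\Omega}\subset\Phi(\Omega)$ proceeds instead by a direct contradiction argument claiming that a neighborhood $U\subset\widehat{\Omega}$ disjoint from $\Phi(\Omega)$ would eventually be disjoint from $A_j(\Omega)$; that argument is terse and arguably skirts the same underlying difficulty, so your approach is not worse in spirit---it simply makes the missing step explicit.
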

\begin{proof}
By Lemma \ref{gomp} and Proposition \ref{uni}, one immediately observes that $\Phi(\Omega)$ is an open subset in $\mathbb{C}^2$. For simplicity, we denote the open set $\Phi(\Omega)$ by $\Omega'$. Let $\Omega_j=A_j(\Omega)$ and obtain $\widehat{\Omega}$ by the discussion in the beginning of the current section. We are going to prove $\Omega'=\widehat{\Omega}$.

We first prove $\widehat{\Omega}\subset\Omega'$. Otherwise, $\Phi(\Omega)\subsetneq\widehat{\Omega}$, so there exists $p\in\widehat{\Omega}$ and a neighborhood $U\in\widehat{\Omega}$ so that $\Phi(\Omega)\cap U=\varnothing$. Thus, there is a big $N$ such that for any $j>N$, $A_j(\Omega)\cap U=\varnothing$. But this contradicts the definition of $\widehat{\Omega}$.

We now prove $\Omega'\subset\widehat{\Omega}$. If not, there exists a point $p\in\widehat{\Omega}^c$ and a neighborhood $U$ of $p$ in $\widehat{\Omega}$ such that for some $N>0$ and any $j>N$, we have $A_j^{-1}(U)\cap\Omega=\varnothing$. That is, $A_j(\Omega)\cap U=\varnothing$ for any $j>N$. This again contradicts the definition of $\widehat{\Omega}$.

Now we prove injectivity. Suppose we have $\Phi(z')=\Phi(z'')$. Then 
\begin{equation}\label{inj}
\begin{split}
z'-z''&=\Phi_j^{-1}(\Phi_j(z'))-\Phi_j^{-1}(\Phi_j(z''))\\&=\Phi_j^{-1}(\Phi_j(z'))-\Psi(\Phi_j(z'))+\Psi(\Phi_j(z'))-\Psi(\Phi_j(z''))\\&\phantom{{}={}}+\Psi(\Phi_j(z''))-\Phi_j^{-1}(\Phi_j(z'')),
\end{split}
\end{equation}
where $\Psi$ is the limit of $\phi_j^{-1}$. Since $\phi_j^{-1}$ is uniformly bounded, the convergence is not an issue. One can easily see the first and last two terms in the right hand side of Equation (\ref{inj}) vanish as $j$ goes to infinity, while the middle two terms vanish because $\phi(z')=\phi(z'')$. That is, $z'=z''$, which completes the proof.
\end{proof}

\begin{proof}[Proof of Proposition \ref{s1p}]
We have shown if $\|(J\phi_j(q))^{-1}J\phi_j(z,w)\|$ is locally uniformly bounded, then the sequence $A_j$ is normal and thus convergent to a biholomorphism by Lemma \ref{bi}. So it will be enough to show uniform boundness of $\|(J\phi_j(q))^{-1}J\phi_j(z,w)\|$.

This argument is similar to the one Frankel used in \cite{Fr86}. For the sake of completeness, we outline the proof. Let $D\Subset\Omega$ be an arbitrary campact subset. Since $\phi_j$ is normal, there exists a neighborhood $U$ of $p$ in $\mathbb{C}^2$ and $N>0$ such that $\phi_j(D)\Subset U\cap\Omega$ and $U\cap\Omega$ is convex for $j>N$. Thus, it makes sense to define the following map $F_j$ from $D\times D$ to $\Omega$ for $j>N$:
\begin{equation*}
\begin{split}
&(J\phi_j(q))^{-1}\phi_j)^{-1}\circ\bigg(\dfrac{(J\phi_j(q))^{-1}\phi_j(z_1,w_1)+(J\phi_j(q))^{-1}\phi_j(z_2,w_2)}{2}\bigg)\\=&((\phi_j)^{-1}\circ\bigg(\dfrac{\phi_j(z_1,w_1)+\phi_j(z_2,w_2)}{2}\bigg).
\end{split}
\end{equation*}
Let us define $\omega_j=(J\phi_j(q))^{-1}\phi_j$ so that 
\begin{equation}\label{FE}
2\omega_j\circ F_j(x,y)=\omega_j(x)+\omega_j(y).
\end{equation} 
Please note, for simplicity, we denote $(z_1, w_1)$ and $(z_2,w_2)$ by $x$ and $y$, respectively, and $F_j(x,x)=x$. Differentiate both sides of Equation (\ref{FE}) with respect to $x$ and let $y=x$. We obtain that for the $j-th$ term (and we will not write $j$ for simplicity), $2\omega_\alpha(x)F^\alpha_\beta(x,x)=\omega_\beta(x)$, which implies $F^\alpha_\beta=\dfrac{\delta^\alpha_\beta(x,x)}{2}$, where $\delta$ is the Kronecker notation. 

We continue to differentiate both sides of Equation (\ref{FE}) with respect to $x$ and let $y=x$ for the second time. One obtains that $2\omega_{\alpha,\gamma}(x)F^\alpha_{\beta_1}(x,x)F^\gamma_{\beta_2}(x,x)+2\omega_\alpha F^\alpha_{\beta_1,\beta_2}(x,x)=\omega_{\beta_1,\beta_2}(x)$, which gives $\omega_\alpha(x) F^\alpha_{\beta_1,\beta_2}(x,x)=\dfrac{1}{2}\omega_{\beta_1,\beta_2}(x)$. Thus $\|\nabla J\omega_j\|_D<C\|J\omega_j\|_D$, where $\|\cdot\|_D$ denotes the maximum of operator norm over $D$. Since we have $J\omega_j(q)=(J\phi_j(q))^{-1}J\phi_j(q)=\rm Id$, by the comparison theorem of O.D.E, we have $\|J\omega_j(z,w)\|_D<C'$, which completes the proof.
\end{proof}

\begin{remark}\label{nor}
The condition ``locally convex'' guarantees the family of automorphisms is normal and is not removable. Please see the following examples.
\end{remark}

\begin{example}\label{cex1}
$\Omega=\lbrace (z,w)\in\mathbb{C}^2: |z-w^2|^2+|w|^4<1\rbrace$.\\
Let $\displaystyle\phi_j=\Bigg(\frac{(z-w^2)-\alpha_j}{1-\bar{\alpha_j}(z-w^2)}+\sqrt{\frac{1-|\alpha_j|^2}{(1-\bar{\alpha_j}(z-w^2))^2}}, \sqrt[\leftroot{-1}\uproot{12}\scriptstyle 4]{\frac{1-|\alpha_j|^2}{(1-\bar{\alpha_j}(z-w^2)^2)^2}}\Bigg)$ be a family of automorphisms, where $|\alpha_j|\rightarrow 1$ as $j\rightarrow\infty$ (one can show it is not locally convex around the accumulation point $(1,0)$). The Jacobian $J\phi_j$ is $\begin{pmatrix}
c^j_{11}&c^j_{12}\\c^j_{21}&c^j_{22}
\end{pmatrix}$, where 
\begin{equation*}
\begin{split}
&c^j_{11}=\frac{1-|\alpha_j|^2}{(1-\bar{\alpha_j}(z-w^2))^2}-\frac{\bar{\alpha_j}\sqrt{1-|\alpha_j|^2}}{(1-\bar{\alpha_j}(z-w^2))^2}w^2\\&
c^j_{12}=-2\frac{1-|\alpha_j|^2}{(1-\bar{\alpha_j}(z-w^2))^2}w+2\frac{\bar{\alpha_j}\sqrt{1-|\alpha_j|^2}}{(1-\bar{\alpha_j}(z-w^2))^2}w^3+2\frac{\sqrt{1-|\alpha_j|^2}}{1-\bar{\alpha_j}(z-w^2)}w\\&
c^j_{21}=\frac{\bar{\alpha_j}\sqrt[4]{1-|\alpha_k|^2}}{2(1-\bar{\alpha_j}(z-w^2))^\frac{3}{2}}w\\&
c^j_{22}=\frac{2\bar{\alpha_j}\sqrt[4]{1-|\alpha_j|^2}}{(1-\bar{\alpha_j}(z-w^2))^\frac{3}{2}}w^2+\sqrt[\leftroot{-1}\uproot{12}\scriptstyle 4]{\frac{1-|\alpha_j|^2}{(1-\bar{\alpha_j}(z-w^2))^2}}
\end{split}.
\end{equation*}
Also, $(J\phi_j(0,0))^{-1}=\begin{pmatrix}
\dfrac{1}{1-|\alpha_j|^2}&0\\0&\dfrac{1}{\sqrt[4]{1-|\alpha_j|^2}}
\end{pmatrix}$
and one can easily show that $\|(J\phi_j(0,0))^{-1}J\phi_j(z,w)\|$ is not bounded.
\end{example}

With a similar computation as in Example $\ref{cex1}$, the reader can see the same situation in the following domain.

\begin{example}\label{cex2}
$\Omega=\lbrace (z,w)\in\mathbb{C}^2: |z|<1, |w-z^2|<1\rbrace$.
\end{example}

Now we analyze the defining function of $\widehat{\Omega}$. First, we consider the case when both eigenvalues of $J\phi_j$ approach zero. There are two possibilities: $p$ is of finite type or infinite type. 

The following lemma does the job for a finite type point $p$.
\begin{lemma}[the orbit accumulation point case, point is finite type]\label{1}
Let $\Omega$ be a bounded domain in $\mathbb{C}^2$. Assume there is a family of automorphisms $\phi_j=(f_j, g_j)$ and an interior point $q\in\Omega$ such that $\phi_j(q)\rightarrow p$ where $p\in\partial\Omega$. We also assume the normal domain of $\Omega$ at $p$ is locally convex and has a smooth boundary around $p$. If $p$ is a boundary point of finite type, then $\Omega$ is biholomorphic to $v=\rho_k(z,\bar{z})$ where $\rho_k$ is a homogeneous polynomial with degree $k$.
\end{lemma}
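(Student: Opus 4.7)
The plan is to combine Proposition~\ref{s1p} with an asymptotic analysis of the defining function near $p$. Proposition~\ref{s1p} already produces a biholomorphism $\Phi:\Omega\to\widehat\Omega$ as the uniform limit of $A_j(z,w)=(J\phi_j(q))^{-1}(\phi_j(z,w)-\phi_j(q))$, so the lemma reduces to identifying $\widehat\Omega$ from the local geometry of $\Omega$ at $p$. Using that $\phi_j$ is an automorphism, $A_j(\Omega)$ is simply the affine image $(J\phi_j(q))^{-1}(\Omega-\phi_j(q))$. I first pass to the normal domain at $p=(0,0)$, in which $\Omega$ is locally
\[
\Omega=\{(z,w):v>\rho(z,\bar{z},0)+O(u^2)+O(zu)\}.
\]
The finite type hypothesis, together with the remark following the definition of finite type, forces $\rho(z,\bar{z},0)=\rho_k(z,\bar{z})+o(|z|^k)$ with $\rho_k$ a nonzero homogeneous polynomial of degree $k$, and the local convexity assumption ensures $\rho_k$ is a plurisubharmonic polynomial.

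The second step is to normalize $J\phi_j(q)$. By Lemma~\ref{st} the eigenvalue functions $\lambda_j^1\succ\lambda_j^2$ of $J\phi_j$ are holomorphic, and in the orbit accumulation point case both tend to zero. After a uniformly bounded unitary change of frame (which at the level of the limit produces only a final affine equivalence and does not change $\widehat\Omega$ up to biholomorphism), together with rotations in each coordinate axis to push $\lambda_j^i$ onto the positive real axis, I may assume along a subsequence that $J\phi_j(q)=\operatorname{diag}(\lambda_j^1,\lambda_j^2)$ with $\lambda_j^1\geq\lambda_j^2>0$. Writing $\phi_j(q)=(z_j^\ast,w_j^\ast)$ and substituting $(z,w)=(\lambda_j^1\tilde z+z_j^\ast,\lambda_j^2\tilde w+w_j^\ast)$ into the defining inequality of $\Omega$, the condition $(\tilde z,\tilde w)\in A_j(\Omega)$ becomes
\[
\tilde v>\frac{\rho(\lambda_j^1\tilde z+z_j^\ast,\lambda_j^1\bar{\tilde z}+\bar z_j^\ast,\lambda_j^2\tilde u+u_j^\ast)-v_j^\ast}{\lambda_j^2}+O(\lambda_j^1\tilde z\tilde u)+O(\lambda_j^2\tilde u^2).
\]
The two error terms vanish as $j\to\infty$. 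Taylor-expanding the numerator about $(z_j^\ast,\bar z_j^\ast,u_j^\ast)$ and invoking the decomposition $\rho(z,\bar z,0)=\rho_k(z,\bar z)+o(|z|^k)$, the surviving leading behavior is $(\lambda_j^1)^k\rho_k(\tilde z,\bar{\tilde z})/\lambda_j^2+o(1)$ after absorbing the translation piece into the reference level of the normal form.

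The main obstacle, and the point where the hypotheses really bite, is the non-degeneracy of the scale ratio $c_j:=(\lambda_j^1)^k/\lambda_j^2$. If $c_j\to\infty$, the defining inequality becomes infeasible away from $\{\tilde z=0\}$ and $\widehat\Omega$ has empty interior, contradicting Proposition~\ref{s1p}; if $c_j\to 0$, the inequality degenerates to $\tilde v>0$, so $\widehat\Omega=\mathbb{C}\times\mathbb{H}^+$, which is not Kobayashi hyperbolic and therefore cannot be biholomorphic to the bounded domain $\Omega$. Both alternatives contradict Proposition~\ref{s1p}, which has already exhibited $\widehat\Omega$ as a genuine open set biholomorphic to $\Omega$. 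Hence $c_j\to c\in(0,\infty)$ along a subsequence, and absorbing $c$ into $\rho_k$ via a final rescaling of $\tilde z$ identifies the limit defining function as $v>\rho_k(z,\bar z)$. Composing $\Phi$ with these normalizations exhibits $\Omega$ as biholomorphic to $\{v>\rho_k(z,\bar z)\}$, as claimed.
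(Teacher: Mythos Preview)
Your overall strategy mirrors the paper's: use Proposition~\ref{s1p} to produce $\widehat\Omega$ as an affine rescaling of $\Omega$ near $p$, then identify the limiting defining function via the homogeneous part $\rho_k$ and a non-degeneracy argument for the scale ratio. The gap is in your normalization step. You assert that ``after a uniformly bounded unitary change of frame'' one may take $J\phi_j(q)=\operatorname{diag}(\lambda_j^1,\lambda_j^2)$, but a complex matrix is unitarily diagonalizable only when it is normal, and there is no reason for the Jacobian of an automorphism to be normal. Even a general (non-unitary) diagonalization need not exist (Jordan blocks are possible) and, when it does, the conjugating matrices need not be uniformly bounded in $j$: if the two eigenvalues coalesce, the eigenvectors can become nearly parallel and the change-of-basis matrix blows up. In the orbit accumulation \emph{variety} case the paper does pass to an eigenbasis and proves the limiting eigenvectors are independent, but that argument relies on exactly one eigenvalue staying away from zero; here both $\lambda_j^i\to 0$ and no such separation is available.

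The paper sidesteps this by keeping the full matrix $J\phi_j(q)=\bigl(\begin{smallmatrix} b^j_{11}&b^j_{12}\\ b^j_{21}&b^j_{22}\end{smallmatrix}\bigr)$ throughout: the scaled boundary reads $\Im(b^j_{21}z'+b^j_{22}w')=\rho(b^j_{11}z'+b^j_{12}w',\Re(b^j_{21}z'+b^j_{22}w'))$, one divides by $b^j_{22}$ (after arranging $|b^j_{22}/b^j_{21}|\geq C$), and your non-degeneracy argument is run for $(b^j_{11})^k/b^j_{22}$ and $(b^j_{12})^k/b^j_{22}$ simultaneously. The limit is $\Im(hz'+w')=\rho_k(cz'+dw',\overline{cz'+dw'})$ for constants $h,c,d$, and only then does a final linear change of variables reduce to $v>\rho_k(z,\bar z)$. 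Your argument becomes correct once recast in these terms; the diagonal shortcut is not justified. A minor additional point: you invoke ``the orbit accumulation point case'' without saying why finite type forces it --- the reason, which the paper states, is that a finite type boundary point cannot lie on a nontrivial analytic variety contained in $\partial\Omega$.
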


\begin{proof}
For the discussion, we refer the reader to \cite{Ki90}.

Since $p$ is finite type, it cannot be in the orbit accumulation variety case. This is because $\partial\Omega$ around $p$ does not contain a non-trivial analytic variety.

Since $p$ is a finite type point, the defining function $v=\rho_k(z,\bar{z},0)+C(z, \bar{z})u+o(u^2)$, where $\lim_{z\to 0}\dfrac{C(z,\bar{z})}{|z|}=0$ (by Taylor theorem and the definition of normal domain) has the homogeneous polynomial $\rho_k(z,\bar{z},0)$ as one component. For simplicity we will write $f(z,\bar{z},u)$ as $f(z,u)$ for any real function $f$. Let 
\begin{equation*}
J\phi_j(q)=\begin{pmatrix} b^j_{11}&b^j_{12}\\b^j_{21}&b^j_{22}\end{pmatrix},
\end{equation*}
and we obtain by computation 
\begin{equation*}
\begin{pmatrix}
b^j_{11}&b^j_{12}\\b^j_{21}&b^j_{22}
\end{pmatrix}
\begin{pmatrix}
z'\\w'
\end{pmatrix}=\begin{pmatrix}
z\\w
\end{pmatrix},
\end{equation*}
where $z'$ and $w'$ are new coordinates after $A_j$.
We assume, without loss of generality, $|b^j_{22}/ b^j_{21}|\geq C>0$. After the $j-th$ step scaling, we obtain the defining function of $\partial\Omega_j$ which becomes $\Im(b^j_{21}z'+b^j_{22}w')=\rho(b^j_{11}z'+b^j_{12}w', \Re(b^j_{21}z'+b^j_{22}w'))$. In $\mathbb{CP}^2$, $\Omega_j$ is bounded by the following boundary points
\begin{equation*}
[b^j_{11}z'+b^j_{12}w', \Re(b^j_{21}z'+b^j_{22}w')+i\rho(b^j_{11}z'+b^j_{12}w', \Re(b^j_{21}z'+b^j_{22}w')), 1]
\end{equation*}  
in homogeneous coordinates, where $b^j_{11}z'+b^j_{12}w'$ is free.

Now, dividing by $b^j_{22}$, we get
\begin{equation*}
\Im(\dfrac{b^j_{21}}{{b^j_{22}}}z'+w')=\dfrac{\rho(b^j_{11}z'+b^j_{12}w', \Re(b^j_{21}z'+b^j_{22}w'))}{{b^j_{22}}}.
\end{equation*}  
Embedding into $\mathbb{C}^3$, one can see $\Omega_j$ approaches the domain bounded by 
\begin{equation}\label{theone}
\lim_{j\to\infty}\Im(\dfrac{b^j_{21}}{{b^j_{22}}}z'+w')=\lim_{j\to\infty}\dfrac{\rho(b^j_{11}z'+b^j_{12}w', \Re(b^j_{21}z'+b^j_{22}w'))}{{b^j_{22}}}
\end{equation} 
in $\mathbb{C}^3$ (also in $\mathbb{C}^2$). Hence $\widehat{\Omega}$ is biholomorphic onto the domain with the boundary defined in Equation (\ref{theone}). 

Thus, it is enough to study the limit of $\partial\Omega_j$. Looking at Equation (\ref{theone}), by the existence of $\partial\widehat{\Omega}$ (otherwise, by \cite{Ki90} $\Omega$ is not bounded), the right hand side must converge to a function, as the left hand side already converges to a function. Otherwise, the limit domain will be $v>\infty$ or $v>0$, neither of which is possible for a bounded domain. Hence, both of $\dfrac{(b^j_{11})^k}{b^j_{22}}$ and $\dfrac{(b^j_{12})^k}{b^j_{22}}$ converge to nonzero constants (see \cite{Ki90}). At the same time, $\rho_k$ converges to a function of $(z,w)$, while the higher terms converge to $0$. Finally we get that $\Omega$ can be defined by $\Im(hz'+w')=\rho_k(cz'+dw',\overline{cz'+dw'})$, where $h=\lim_{j\to\infty}\dfrac{b^j_{21}}{b^j_{22}}$, $c=\lim_{j\to\infty}\dfrac{b^j_{11}}{(b^j_{22})^{1/k}}$ and $d=\lim_{j\to\infty}\dfrac{b^j_{12}}{(b^j_{22})^{1/k}}$. After a change of variable, we finish the proof.
\end{proof}

We will analyze the infinite type $p$ which is not of infinite type II when both of the eigenvalues of $J\phi_j$ go to zero.

The following lemma motivates the key lemma, which makes it possible to analyze the infinite type boundary.

\begin{lemma}
Let $y=f(x)$ be a smooth function of one variable with the graph passing through $(0,0)$, where $x\in(-5,5)$. Let $\lambda_j^i\rightarrow 0$ as $j\rightarrow\infty$ where $\lambda_j^i$ is a nowhere zero sequence and we assume $g(x):=\lim_{j\to\infty}\dfrac{f(\lambda_j^1x)}{\lambda_j^2}$ exists, where $f(x)\in o(x^m)$ for any positive number $m$. Moreover, assume $f(ax)\leq |a|^{m_x}f(x)$ for all $0<a<1$, where $m_x\rightarrow\infty$ as $z\rightarrow 0$. If for a point $x_0\in(-5,5)$ we have $g(x_0)=C>0$, then for any $|x|<|x_0|$, $g(x)=0$. 
\end{lemma}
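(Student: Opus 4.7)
The key idea is to exploit the infinite type I inequality by applying it with the reference point not fixed, but moving along the scaling sequence. Concretely, given $x$ with $|x|<|x_0|$, I set $a=x/x_0$, so that $0<|a|<1$. The plan is to apply the hypothesis
\[
f(ay)\leq |a|^{m_y}f(y)
\]
at the point $y=\lambda_j^1 x_0$; since $\lambda_j^1\to 0$ we have $y\to 0$, and therefore the exponent $m_y=m_{\lambda_j^1 x_0}$ tends to $+\infty$ as $j\to\infty$. This gives
\[
f(\lambda_j^1 x)=f\bigl(a\cdot\lambda_j^1 x_0\bigr)\leq |a|^{m_{\lambda_j^1 x_0}}f(\lambda_j^1 x_0).
\]

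Next I divide both sides by $\lambda_j^2$ (which is nonzero by hypothesis, and I may assume positive after passing to a subsequence) and let $j\to\infty$. The right-hand factor $f(\lambda_j^1 x_0)/\lambda_j^2$ converges to $g(x_0)=C$, a finite positive number, while $|a|^{m_{\lambda_j^1 x_0}}\to 0$ because $|a|<1$ is fixed and the exponent diverges to $+\infty$. Passing to the limit therefore yields $g(x)\leq 0$. Combined with $f\geq 0$ on a neighborhood of $0$ (which is the geometrically relevant case, since $f$ here plays the role of the nonnegative component $\rho(z,\bar z,0)$ of the defining function of a locally convex normal domain), one gets $g(x)\geq 0$ as well, and hence $g(x)=0$.

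The main obstacle I anticipate is bookkeeping around signs and absolute values: one has to make sure the inequality $f(ay)\leq |a|^{m_y}f(y)$ is being applied in a regime where $f(y)\geq 0$ so that multiplying by $|a|^{m_y}<1$ actually drives the bound to zero rather than making it larger in absolute value. This is why the assumption that we work on a one-sided neighborhood of $0$ (where $f$ is nonnegative, consistent with the defining-function interpretation) is essential. Once the sign is under control, the argument is a clean two-line scaling estimate, and the decay comes entirely from the fact that $m_y\to\infty$ along the sequence $y=\lambda_j^1 x_0\to 0$, which is precisely the strengthened form of infinite type I noted in the remark following that definition.
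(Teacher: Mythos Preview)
Your argument is essentially identical to the paper's proof: set $a=x/x_0$, apply the hypothesis at the moving point $y=\lambda_j^1 x_0$ so that the exponent $m_{\lambda_j^1 x_0}\to\infty$, divide by $\lambda_j^2$, and pass to the limit. If anything, you are more careful than the paper about the sign issues (positivity of $\lambda_j^2$ after a subsequence, and the need for $f\ge 0$ near $0$ to upgrade $g(x)\le 0$ to $g(x)=0$), which the paper's proof leaves implicit.
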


\begin{proof}
Fix $x,x_0$ so that $|\dfrac{x}{x_0}|<1$. Since $g(x_0)=C>0$, we have, $f(\dfrac{x}{x_0}\cdot \lambda^1_j x_0)\leq |\dfrac{x}{x_0}|^{m_j} f(\lambda^1_jx_0)$, where $m_j\rightarrow\infty$ as $j\rightarrow \infty$. Dividing by $\lambda^2_j$ on both sides, we find that, $g(x)\leq |\dfrac{x}{x_0}|^{m_j} g(x_0)$. Let $j\rightarrow\infty$, we obtain $g(x)=0$ for $|x|<|x_0|$.
\end{proof}

actually, the condition $\lim_{j\to\infty}\dfrac{f(\lambda_j^1x_0)}{\lambda_j^2}=C>0$ for some $x_0$ of the last lemma never happens.

\begin{lemma}\label{dier}
Let $y=f(x)$ be a smooth function passing through $(0,0)$, where $x\in(-5,5)$. Let $\lambda_j^i\rightarrow 0$ as $j\rightarrow\infty$ where $\lambda_j^i$ is nowhere zero sequence. Moreover, assume $f(ax)\leq |a|^{m_x}f(x)$ for all $0<a<1$, where $m_x\rightarrow\infty$ as $z\rightarrow 0$. Then $g(x):=\lim_{j\to\infty}\dfrac{f(\lambda_j^1x)}{\lambda_j^2}=0$ for any $x\in(-5,5)$.
\end{lemma}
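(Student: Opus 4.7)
The strategy is to exploit the scaling hypothesis in the opposite direction from the previous lemma: rather than passing from $x_0$ down to a smaller $|x|$, we will pass from $x_0$ \emph{up} to a slightly larger point $x_0/a$, and let the factor $|a|^{m}$ annihilate the right-hand side in the limit.

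Fix an arbitrary $x_0 \in (-5,5)$. The case $x_0 = 0$ is trivial since $f(0) = 0$, so assume $x_0 \neq 0$. Choose any $a \in (|x_0|/5,\,1)$, which is possible because $|x_0|/5 < 1$; this ensures that $x_0/a \in (-5,5)$. Now apply the hypothesis $f(a\cdot x) \leq |a|^{m_x} f(x)$ with the choice $x = \lambda_j^1 x_0 / a$. For large $j$ this $x$ lies near $0$, so its exponent $m_{\lambda_j^1 x_0 / a}$ is meaningful and tends to $\infty$ as $j\to\infty$. The inequality rewrites as
\begin{equation*}
f(\lambda_j^1 x_0) \;\leq\; a^{\,m_{\lambda_j^1 x_0/a}}\, f(\lambda_j^1 x_0/a).
\end{equation*}

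Divide both sides by $\lambda_j^2$ and let $j\to\infty$. The left-hand side tends to $g(x_0)$. On the right-hand side, the prefactor $a^{m_{\lambda_j^1 x_0/a}}$ tends to $0$ since $0<a<1$ and the exponent diverges to $\infty$, while the remaining factor $f(\lambda_j^1 x_0/a)/\lambda_j^2$ tends to the (finite) limit $g(x_0/a)$. Hence $g(x_0) \leq 0$. Because $f\geq 0$ in the contexts where the lemma is applied (so that $g\geq 0$), we conclude $g(x_0) = 0$.

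The principal obstacle is justifying that the limit $g(x_0/a)$ is finite so that the product $a^{m_j}\cdot h_j(x_0/a)$ actually tends to $0$; a priori the inequality only gives an upper bound, and if $h_j(x_0/a)$ grew fast enough one might have $a^{m_j}\cdot h_j(x_0/a)\not\to 0$. The cleanest way to handle this is to argue by contradiction using the previous lemma and the remark preceding the statement: assuming $g(x_0) = C > 0$ at some point, the previous lemma forces $g\equiv 0$ on $(-|x_0|,|x_0|)$, while the reverse-scaling inequality above forces $h_j(x_0/a)\to\infty$ for every admissible $a$, so the limit defining $g$ at $x_0/a$ cannot be the finite nonzero quantity needed to make both halves of the picture consistent — contradicting the existence of $g$ as a finite function arising from a convergent rescaling of a bounded limit domain $\widehat{\Omega}$. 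Thus no $x_0$ can satisfy $g(x_0) > 0$, and $g\equiv 0$ on $(-5,5)$.
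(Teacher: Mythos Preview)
Your approach is genuinely different from the paper's and, in its core idea, more direct. The paper argues by contradiction: assuming $g(x_0)=C>0$, it invokes the previous lemma to get $g\equiv 0$ on $(-|x_0|,|x_0|)$, and then uses continuity of $f$ to manufacture a sequence $x_j$ with $|x_j|<|x_0|$ and $|f(\lambda_j^1 x_j)-f(\lambda_j^1 x_0)|<|\lambda_j^2|/j$, so that $f(\lambda_j^1 x_j)/\lambda_j^2\to C$ as well, contradicting $g(x_j)=0$. Your first paragraph instead applies the scaling hypothesis in the \emph{upward} direction --- from $x_0$ to $x_0/a$ with $a<1$ --- to obtain $g(x_0)\le 0$ in one stroke, and then uses $g\ge 0$. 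This is cleaner and does not require the previous lemma at all; what it buys is that no continuity-and-sequence construction is needed.

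Two remarks. First, your argument tacitly uses $f\ge 0$ and that division by $\lambda_j^2$ preserves the inequality; neither is explicit in the statement, though the paper's own proof of the preceding lemma makes the same moves, so you are at parity there. Second, your closing paragraph is both unnecessary and somewhat garbled. The lemma already presupposes that $g(x)=\lim_j f(\lambda_j^1 x)/\lambda_j^2$ exists finitely for every $x\in(-5,5)$ --- otherwise neither the conclusion nor the paper's proof is meaningful --- so the finiteness of $g(x_0/a)$ is a hypothesis you may simply invoke; there is no need to reach outside the lemma to the geometry of $\widehat{\Omega}$. Moreover, the phrase ``finite nonzero quantity needed to make both halves of the picture consistent'' is off: the contradiction is simply that your inequality forces $f(\lambda_j^1 x_0/a)/\lambda_j^2\to\infty$, against the assumed finiteness of $g(x_0/a)$. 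Delete the second paragraph and your first paragraph stands on its own as a cleaner proof than the paper's.
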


\begin{proof}
Suppose there is some $x_0$ such that $\lim_{j\to\infty}\dfrac{f(\lambda_j^1x_0)}{\lambda_j^2}=C>0$. Define $x_j$ such that $|x_j|<|x_0|$ and for arbitrary $j$, we have $|f(\lambda^1_jx_j)-f(\lambda^1_jx_0)|<\dfrac{|\lambda^2_j|}{j}$. This is possible because $\lambda^2_j$ is never zero and $f$ is continuous. But now $\dfrac{|f(\lambda^1_jx_j)-f(\lambda^1_jx_0)|}{|\lambda^2_j|}\rightarrow 0$ as $j\rightarrow\infty$, which contradicts $\lim_{j\to\infty}\dfrac{f(\lambda_j^1x_0)}{\lambda_j^2}=C>0$, by the last lemma.
\end{proof}

Let us return to the multivariable case.

\begin{lemma}\label{disa}
Let $f$ be a real smooth function defined in $\mathbb{C}$ whose graph includes the origin. We assume $b^j_{11}$, $b^j_{12}$, $b^j_{22}$ and $b_j$ approach $0$. If $g(z,w):=\lim_{j\to\infty}\dfrac{f(b^j_{11}z+b^j_{12}w)}{b^j_{22}}$ and $f(ax)\leq |a|^{m_x}f(x)$  for all complex $0<|a|<1$, where $m_x\rightarrow\infty$ as $z\rightarrow 0$, then $\dfrac{f(b^j_{11}z+b^j_{12}w)}{b^j_{22}}\rightarrow 0$ everywhere as $j\rightarrow\infty$.
\end{lemma}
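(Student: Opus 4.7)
The plan is to reduce to the one-variable situation already handled in Lemma \ref{dier} by observing that the linear expression $\zeta^j := b^j_{11} z + b^j_{12} w$ lives in $\mathbb{C}$ and tends to $0$ as $j \to \infty$ for every fixed $(z,w)$. Thus the quotient $f(\zeta^j)/b^j_{22}$ is morally indexed by a single complex parameter evaluated along the vanishing sequence $\zeta^j$, which is precisely the regime in which the contraction hypothesis becomes powerful.

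I would first establish the bivariate analogue of the preparatory lemma. Assuming for contradiction that $g(z_0, w_0) = C > 0$ for some $(z_0, w_0)$, set $\zeta^j_0 := b^j_{11} z_0 + b^j_{12} w_0$. For any fixed complex $a$ with $0 < |a| < 1$, the point $(a z_0, a w_0)$ satisfies $b^j_{11}(a z_0) + b^j_{12}(a w_0) = a \zeta^j_0$, so applying the contraction hypothesis at the nonzero complex number $\zeta^j_0$ yields
\begin{equation*}
\frac{f(a\zeta^j_0)}{b^j_{22}} \le |a|^{m_{\zeta^j_0}} \cdot \frac{f(\zeta^j_0)}{b^j_{22}}.
\end{equation*}
Because $\zeta^j_0 \to 0$, we have $m_{\zeta^j_0} \to \infty$, so $|a|^{m_{\zeta^j_0}} \to 0$ for each fixed $a$; combined with $f(\zeta^j_0)/b^j_{22} \to C$, this forces $g(a z_0, a w_0) = 0$ for every $a$ with $0 < |a| < 1$.

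Next, in direct parallel with the argument of Lemma \ref{dier}, I would exploit smoothness of $f$ to pick $a_j$ with $|a_j| < 1$ and $a_j \to 1$ such that $|f(a_j \zeta^j_0) - f(\zeta^j_0)| < |b^j_{22}|/j$, so that $\lim_j f(a_j \zeta^j_0)/b^j_{22} = C$. At the same time, the contraction bound gives $f(a_j \zeta^j_0)/b^j_{22} \le |a_j|^{m_{\zeta^j_0}} \cdot f(\zeta^j_0)/b^j_{22}$, and since one retains freedom in the choice of $a_j$ within the continuity estimate, one can arrange $m_{\zeta^j_0}(1 - |a_j|) \to \infty$, forcing $|a_j|^{m_{\zeta^j_0}} \to 0$ and hence $\lim_j f(a_j \zeta^j_0)/b^j_{22} = 0$. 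The contradiction $C = 0$ completes the proof.

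The main obstacle is the quantitative balancing in this last step: $a_j$ must be close enough to $1$ for continuity to transport the value $C$ across, yet far enough from $1$ for the super-polynomial decay of $|a_j|^{m_{\zeta^j_0}}$ to annihilate it. This is exactly the same tension that underlies the one-variable proof of Lemma \ref{dier}, so I would import that bookkeeping verbatim; the bivariate character of the present statement introduces no new difficulty once the comparison point $(a z_0, a w_0)$ is selected along the complex ray through $(z_0, w_0)$.
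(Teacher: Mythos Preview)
Your proposal is correct and follows essentially the same route as the paper's own proof: assume a point $(z_0,w_0)$ with $g(z_0,w_0)=C>0$, use the contraction hypothesis along the complex ray to force $g$ to vanish strictly inside, then pick a nearby approximating sequence via continuity of $f$ to manufacture the contradiction. The paper phrases the approximating sequence as general points $(z_j,w_j)\to(z_0,w_0)$ with $|b^j_{11}z_j+b^j_{12}w_j|<|b^j_{11}z_0+b^j_{12}w_0|$, whereas you take the cleaner special case $(a_jz_0,a_jw_0)$ along the ray; this is the same mechanism, and your explicit acknowledgment of the balancing tension between the continuity estimate and the decay of $|a_j|^{m_{\zeta^j_0}}$ matches what the paper leaves implicit when it writes ``by a similar argument as before.''
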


\begin{proof}
We assume the Lemma is incorrect.

One observes 
\begin{equation*}
\frac{f(b^j_{11}z+b^j_{12}w)}{b^j_{22}}=\frac{f((b^j_{11}z_0+b^j_{12}w_0)(\dfrac{b^j_{11}z+b^j_{12}w}{b^j_{11}z_0+b^j_{12}w_0}))}{b^j_{22}}.
\end{equation*} 
Hence, we find $(z,w)$ such that $|b^j_{11}z+b^j_{12}w|<|b^j_{11}z_0+b^j_{12}w_0|$ for large $j$, and we can see $\lim_{j\to\infty}\frac{f(b^j_{11}z+b^j_{12}w)}{b^j_{22}}=0$. We continue to prove that no $(z_0,w_0)$ can satisfy $\lim_{j\to\infty}\frac{f((b^j_{11}z_0+b^j_{12}w_0)(\dfrac{b^j_{11}z+b^j_{12}w}{b^j_{11}z_0+b^j_{12}w_0}))}{b^j_{22}}=C>0$. As in Lemma \ref{dier}, choose a sequence $\lbrace(z_j, w_j)\rbrace$ with limit $(z_0,w_0)$ such that $|b^j_{11}z_j+b^j_{12}w_j|<|b^j_{11}z_0+b^j_{12}w_0|$ and $|f(b^j_{11}z_j+b^j_{12}w_j)-f(b^j_{11}z_0+b^j_{12}w_0)|<\dfrac{|b^j_{22}|}{j}$ by continuity. By a similar argument as before, the assumption that $\dfrac{f(b^j_{11}z_0+b^j_{12}w_0)}{b^j_{22}}$ approaches a nonzero number is impossible. 
\end{proof}

\begin{proposition}[the orbit accumulation point case, point is of infinite type I]\label{2}
Let $\Omega$ be a bounded domain in $\mathbb{C}^2$. Assume there is a family of automorphisms $\phi_j=(f_j, g_j)$ and an interior point $q\in\Omega$ such that $\phi_j(q)\rightarrow p$, where $p\in\partial\Omega$. We also assume the normal domain of $\Omega$ at $p$ is locally convex and has a smooth boundary around $p$. For the orbit accumulation point case, $p$ cannot be a boundary point of infinite type I.
\end{proposition}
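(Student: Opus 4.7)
The plan is to adapt the scaling analysis in the proof of Lemma~\ref{1} and apply Lemma~\ref{disa} to show that the infinite type I hypothesis forces $\widehat{\Omega}$ to degenerate into a half-space in $\mathbb{C}^2$, contradicting the boundedness of $\Omega$. Write the entries of $J\phi_j(q)$ as $b^j_{ik}$, $i,k\in\{1,2\}$. In the orbit accumulation point case both eigenvalues, and therefore every entry $b^j_{ik}$, tend to $0$. Following the reduction in the proof of Lemma~\ref{1}, the boundary of $\Omega_j=A_j(\Omega)$ near the origin is
\[\Im(b^j_{21}z'+b^j_{22}w')=\rho\bigl(b^j_{11}z'+b^j_{12}w',\overline{b^j_{11}z'+b^j_{12}w'},\Re(b^j_{21}z'+b^j_{22}w')\bigr).\]
After passing to a subsequence we may assume $|b^j_{22}/b^j_{21}|\geq C>0$, divide both sides by $b^j_{22}$, and set $h=\lim_j b^j_{21}/b^j_{22}$; then the left hand side converges pointwise to $\Im(hz'+w')$.

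Next I would analyze the right hand side using the normal-form expansion
\[\rho(z,\bar{z},u)=\rho(z,\bar{z},0)+O(u^2)+O(zu).\]
For the leading piece, Lemma~\ref{disa} applied to $f(z)=\rho(z,\bar{z},0)$ yields $\rho(b^j_{11}z'+b^j_{12}w',\overline{\cdot},0)/b^j_{22}\to 0$ pointwise, since the infinite type I condition is exactly the hypothesis of that lemma. The error terms vanish by elementary entry-wise smallness: one has
\[\frac{(b^j_{21}z'+b^j_{22}w')^2}{b^j_{22}}=b^j_{22}\Bigl(\tfrac{b^j_{21}}{b^j_{22}}z'+w'\Bigr)^{\!2}\longrightarrow 0\]
because $b^j_{22}\to 0$ and $b^j_{21}/b^j_{22}$ stays bounded, and similarly
\[\frac{(b^j_{11}z'+b^j_{12}w')(b^j_{21}z'+b^j_{22}w')}{b^j_{22}}=(b^j_{11}z'+b^j_{12}w')\Bigl(\tfrac{b^j_{21}}{b^j_{22}}z'+w'\Bigr)\longrightarrow 0\]
since $b^j_{11},b^j_{12}\to 0$. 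Hence the right hand side vanishes in the limit, and the boundary of $\widehat{\Omega}$ is the real hyperplane $\{\Im(hz'+w')=0\}$.

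One side of such a hyperplane is linearly equivalent to $\mathbb{C}\times\mathbb{H}^+$, which contains entire complex lines and is therefore not Kobayashi hyperbolic, hence not biholomorphic to any bounded domain in $\mathbb{C}^2$. Since Proposition~\ref{s1p} supplies a biholomorphism between the bounded domain $\Omega$ and $\widehat{\Omega}$, this is a contradiction, so $p$ cannot be of infinite type I. The main obstacle is the careful asymptotic bookkeeping that forces \emph{every} correction term in the normal-form expansion of $\rho$ to vanish after division by $b^j_{22}$: the infinite type I assumption, invoked through Lemma~\ref{disa}, is precisely what is needed to handle the leading $\rho(z,\bar{z},0)$ term, while the remaining $O(u^2)$ and $O(zu)$ contributions collapse automatically from the entry-wise decay $b^j_{ik}\to 0$. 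Once these asymptotics are in place, the flattening of $\partial\widehat{\Omega}$ to a hyperplane, together with the non-hyperbolicity of the resulting half-space, forces the desired contradiction.
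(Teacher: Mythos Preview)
Your argument is essentially the paper's own proof: set up the scaling as in Lemma~\ref{1}, expand $\rho$ via the normal-form Taylor expansion, kill the leading term with Lemma~\ref{disa}, observe the remaining $O(u^2)$ and $O(zu)$ pieces vanish after division by $b^j_{22}$, and conclude that $\widehat{\Omega}$ is a half-space, contradicting boundedness. One small remark: the implication ``both eigenvalues tend to $0$, and therefore every entry $b^j_{ik}$ tends to $0$'' is not valid as pure linear algebra (a $2\times 2$ matrix can have both eigenvalues zero yet a nonzero off-diagonal entry); the correct justification is that in the orbit accumulation point case the limit map $\phi_0$ is constant, so $J\phi_j(q)\to 0$ entrywise by normal-family convergence of derivatives (cf.\ Remark~\ref{nr}).
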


\begin{proof}
Let $v>\rho(z,\bar{z},u)$ be the local defining function with $p=(0,0)$. As in the proof of Lemma \ref{1}, we just need to study the limit of 
\begin{equation*}
\frac{1}{b^j_{22}}\rho(b^j_{11}z+b^j_{12}w, \Re(b^j_{21}z+b^j_{22}w)).
\end{equation*}

Since $\Phi$ is biholomorphic from $\Omega$ to $\widehat{\Omega}$, the limit of $\partial\Omega_j$ must also be of infinite type at $(0,0)$. Thus, the limit $\frac{1}{b^j_{22}}\rho(b^j_{11}z+b^j_{12}w, 0)$ is of $o(|(z,w)|^m)$ for any positive integer $m$ (because the higher order terms in Equation (\ref{ust}) below all vanish). For simplicity we also write $z'$ and $w'$ as $z$ and $w$ temporarily. After scaling, again, as in the previous lemma, we obtain 
\begin{equation*}
(\dfrac{b^j_{21}}{b^j_{22}}z+w)=\dfrac{1}{b^j_{22}}\rho(b^j_{11}z+b^j_{12}w, \Re(b^j_{21}z+b^j_{22}w)).
\end{equation*} 
By Taylor's theorem, we have the following, where $C$ is a function satisfying $\lim_{x\to\infty}\dfrac{C(x)}{x}=0$.
\begin{equation}\label{ust}
\begin{split}
&\frac{1}{b^j_{22}}\rho(b^j_{11}z+b^j_{12}w, \Re(b^j_{21}z+b^j_{22}w))\\
=&\frac{1}{b^j_{22}}\rho(b^j_{11}z+b^j_{12}w, 0)+\frac{1}{b^j_{22}}C(b^j_{11}z+b^j_{12}w)\Re(b^j_{21}z+b^j_{22}w)\\&+\frac{1}{b^j_{22}}O(M_x(\Re(b^j_{21}z+b^j_{22}w))^2).
\end{split}
\end{equation}
This implies that the limit of $\dfrac{1}{b^j_{22}}\rho(b^j_{11}z+b^j_{12}w, \Re(b^j_{21}z+b^j_{22}w))$ is $0$ because the first term of the last line of Equation (\ref{ust}) approaches $0$ by Lemma \ref{disa}. Also, the second and third term approach $0$ because of the boundness of $\dfrac{\Re(b^j_{21}z+b^j_{22}w)}{b^j_{22}}$. However, if the limit is $0$, then the domain cannot biholomorphic to a bounded domain (indeed, the domain is biholomorphic to the domain defined by $v=0$).
\end{proof}

If the image of the limit is not a point, i.e. if the orbit accumulate variety case happens, we use the following generalized scaling method.

Fix an interior point $q\in\Omega$ and let $\mathfrak{I}$ be the image of $\phi_0:=\displaystyle\lim_{j\to\infty}\phi_j$. Clearly, $\mathfrak{I}$ has the following property.

\begin{proposition}
$\mathfrak{I}$ contains an (small) analytic disc around any point $p\in\mathfrak{I}$.
\end{proposition}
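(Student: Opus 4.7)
The plan is to argue that the map $\phi_0$ has constant Jacobian rank equal to $1$ on $\Omega$, and then to apply the holomorphic constant rank theorem to read off that its image is, locally near any chosen preimage of $p$, a one-dimensional complex submanifold through $p$.

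First, I would recall the setup from Lemma \ref{st} and the case dichotomy immediately following it. Since we are in the orbit accumulation variety case, exactly one of the eigenvalue functions $\lambda_j^i$ of $J\phi_j$ tends to $0$ uniformly on compact subsets while the other does not. Combined with Lemma \ref{st}, which says each $\lambda_0^i:=\lim_{j\to\infty}\lambda_j^i$ is a holomorphic function on $\Omega$ that is either identically zero or nowhere vanishing, this means one of $\lambda_0^1,\lambda_0^2$ is identically zero on $\Omega$ and the other is a nowhere vanishing holomorphic function. Consequently, the holomorphic Jacobian $J\phi_0$ has rank exactly $1$ at every point of $\Omega$.

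Next, let $p\in\mathfrak{I}$ be arbitrary and pick a preimage $q'\in\Omega$ with $\phi_0(q')=p$. Since $J\phi_0$ has constant rank $1$ in a neighborhood of $q'$, the holomorphic rank theorem (the biholomorphic version of the constant rank theorem, obtainable from the holomorphic implicit function theorem applied to a $2\times 2$ minor of $J\phi_0$ that is nowhere zero near $q'$) provides biholomorphic local coordinates $(\zeta_1,\zeta_2)$ centered at $q'$ and $(\eta_1,\eta_2)$ centered at $p$ in which $\phi_0$ takes the normal form $(\zeta_1,\zeta_2)\mapsto(\zeta_1,0)$. In these coordinates the image of $\phi_0$ restricted to a small polydisc $\{|\zeta_1|<\delta,\,|\zeta_2|<\delta\}$ is exactly the one-dimensional disc $\{|\eta_1|<\delta,\,\eta_2=0\}$, which is an analytic disc in $\mathbb{C}^2$ passing through $p=(0,0)$ in the new coordinates.

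The main obstacle, and the only delicate point, is verifying that the rank of $J\phi_0$ really is constantly $1$ rather than jumping between $1$ and $0$; this is precisely what the eigenvalue analysis of Lemma \ref{st} buys us, since nowhere vanishing of the surviving eigenvalue forces the nonzero $2\times 2$ minor of $J\phi_0$ to persist on all of $\Omega$. Once that is in hand the rank theorem is automatic and the conclusion that $\mathfrak{I}$ locally contains an analytic disc around $p$ follows immediately; no estimate on $\phi_j$ is needed beyond the normal family convergence already extracted.
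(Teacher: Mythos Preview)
Your argument is correct and follows exactly the paper's route: establish that $J\phi_0$ has constant rank $1$ via the eigenvalue dichotomy from Lemma~\ref{st}, then invoke the holomorphic constant rank theorem to obtain local charts in which $\phi_0$ is the projection $(\zeta_1,\zeta_2)\mapsto(\zeta_1,0)$, so the local image is an analytic disc. One small slip: you refer to a nowhere-zero ``$2\times 2$ minor'' of $J\phi_0$, but since $\det J\phi_0\equiv 0$ you mean a nowhere-zero $1\times 1$ minor (i.e., a single nonvanishing entry), which is what actually feeds into the rank theorem.
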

\begin{proof}
Clearly $\mathfrak{I}$ has complex dimension $1$. Locally around $p$, we can have the constant rank theorem. There exists two local holomorphic coordinate charts $\psi_1$ and $\psi_2$ so that $\psi_1\circ\phi_0\circ\psi_2^{-1}=(z, 0)$, i.e. $\phi_0\circ\psi_2^{-1}=\psi_1^{-1}\circ(z, 0)$, which reveals that the image of $\phi_0$ is an analytic disc locally (since $\psi_1$ and $\psi_2$ are locally defined).
\end{proof}

Now, let us consider the scaling method for the accumulation variety case. We assume $\lambda^1_j(z,w)\rightarrow 0$ but $\lambda^2_j(z,w)\not\rightarrow 0$.

\begin{lemma}
Let $\Omega$ be a bounded domain in $\mathbb{C}^2$. Assume there is a family of automorphisms $\phi_j=(f_j, g_j)$ such that there is an interior point $q\in\Omega$ and a boundary point $p\in\partial\Omega$ with $\phi_j(q)\rightarrow p$. Suppose $\displaystyle\lim_{j\to\infty}J(\phi_j)$ has rank one, i.e., the image of $\lim_{j\to\infty}\phi_j$ contains more than isolated points. Also, let $A_j(z)=B_j^{-1}\phi_j(z,w)$ where $B^{-1}_j=\bigl(\begin{smallmatrix} 1&0\\0&\frac{1}{\lambda^2_j(q)}\end{smallmatrix}\bigr)\bigl(\begin{smallmatrix} u^j_1(q)&u^j_2(q)\\v^j_1(q)&v^j_2(q)\end{smallmatrix}\bigr)$ and $\bigl(\begin{smallmatrix} u^j_1(q)\\u^j_2(q)\end{smallmatrix}\bigr)$, $\bigl(\begin{smallmatrix} v^j_1(q)\\v^j_2(q)\end{smallmatrix}\bigr)$ are the unit eigenvalues of $J\phi_j(q)$. Then $A_j$ converges uniformly on compact subsets to a biholomorphism between $\Omega$ and $\widehat{\Omega}$, where $\widehat{\Omega}$ is the limit of  $A_j(\Omega)$.
\end{lemma}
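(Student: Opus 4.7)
The plan is to mirror the proof of Proposition \ref{s1p}, adapting the normal-family and identification steps to the asymmetric rescaling $B_j^{-1}$. The core program has two stages: (1) establish that $\{A_j\}$ is a normal family on $\Omega$ via a Frankel-type midpoint-map argument applied to $\omega_j := B_j^{-1}\phi_j$; and (2) use the generalized open-mapping theorem together with the injectivity sandwich from Lemma \ref{bi} to identify $\Phi := \lim_{j\to\infty} A_j$ with a biholomorphism from $\Omega$ onto $\widehat{\Omega}$.

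For stage (1), I would first verify that $\|J\omega_j(z,w)\|_{op}$ is uniformly bounded on any compact $D \Subset \Omega$ containing $q$. Local convexity of the normal domain at $p$, together with normality of $\{\phi_j\}$, gives for large $j$ an open $U$ with $\phi_j(D) \Subset U \cap \Omega$ and $U \cap \Omega$ convex, so the midpoint map $F_j(x,y) = \omega_j^{-1}\!\left(\tfrac{\omega_j(x)+\omega_j(y)}{2}\right)$ is well-defined from $D \times D$ into $\Omega$. Exactly as in Proposition \ref{s1p}, differentiating $2\omega_j \circ F_j(x,y) = \omega_j(x)+\omega_j(y)$ twice in $x$ and setting $y=x$ produces the differential inequality $\|\nabla J\omega_j\|_D \leq C\|J\omega_j\|_D$. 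The new feature is the initial condition: in the original proposition one had $J\omega_j(q) = \mathrm{Id}$, whereas here $J\omega_j(q) = B_j^{-1}J\phi_j(q)$, which factors via the spectral decomposition $J\phi_j(q) = P_j \Lambda_j P_j^{-1}$ (with $\Lambda_j = \mathrm{diag}(\lambda^1_j(q),\lambda^2_j(q))$) as $\mathrm{diag}(\lambda^1_j(q),1)\,P_j^{-1}$. This is uniformly bounded as $\lambda^1_j(q)\to 0$, provided $P_j^{-1}$ is. Granted that, the ODE comparison argument delivers $\|J\omega_j\|_D < C'$, and Montel extracts a convergent subsequence $A_j \to \Phi$.

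For stage (2), injectivity of $\Phi$ follows verbatim from the telescoping identity in Lemma \ref{bi}, using the uniform boundedness of $\phi_j^{-1}$ on compact subsets. Openness of $\Phi(\Omega)$ would follow from Lemma \ref{gomp} once $\det J\Phi$ is nowhere vanishing; the analogue of Proposition \ref{uni}, applied to $\det(B_j^{-1} J\phi_j)$ after normalizing by its maximum on $\bar{D}$, yields uniform positive lower and upper bounds and a nowhere-zero holomorphic limit. The sandwich identification $\Phi(\Omega) = \widehat{\Omega}$ is then the same Hausdorff-limit argument used at the end of Lemma \ref{bi}.

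The principal obstacle I anticipate is uniform nondegeneracy of the eigenvector frame $P_j$ underlying $B_j^{-1}$. Without a uniform bound on $P_j^{-1}$, neither the initial condition of the ODE argument nor the determinant estimate in stage (2) can be pushed through. This should be supplied by the orbit accumulation variety structure itself: the eigenvector for the nonvanishing eigenvalue $\lambda^2_j$ converges to the tangent direction of the regular analytic disc $\mathfrak{I}$ at the limit point, while the eigenvector for the contracting eigenvalue $\lambda^1_j$ converges to a complementary direction (the kernel of the rank-one limit $\lim J\phi_j$); the persistent distinctness of these two directions yields the required uniform bound on $P_j^{-1}$. Once this is in place, the rest of the argument is a transcription of Proposition \ref{s1p} and Lemma \ref{bi}.
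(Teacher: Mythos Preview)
Your proposal is essentially correct, but it takes a longer route than the paper does. You propose to rerun the Frankel midpoint argument from scratch for $\omega_j = B_j^{-1}\phi_j$, deriving the differential inequality $\|\nabla J\omega_j\|_D \le C\|J\omega_j\|_D$ anew and then invoking Lemma~\ref{bi}. The paper instead observes the algebraic identity
\[
B_j^{-1}=\begin{pmatrix}\lambda^1_j(q)&0\\0&1\end{pmatrix}\begin{pmatrix}u^j_1(q)&u^j_2(q)\\v^j_1(q)&v^j_2(q)\end{pmatrix}(J\phi_j(q))^{-1},
\]
so that $A_j$ is simply a bounded linear post-composition of the map $(J\phi_j(q))^{-1}\phi_j$ whose normality was \emph{already} established in Proposition~\ref{s1p}. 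Thus the paper reduces the problem to two finite-dimensional checks: (i) the operator norm of $\mathrm{diag}(\lambda^1_j(q),1)\cdot R_j$ is bounded (immediate, since the rows of $R_j$ are unit vectors), and (ii) $\det R_j$ is bounded away from zero. Your scheme recovers the same two checks, but only after duplicating the Frankel machinery; the paper's factorization buys that for free.

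On the eigenvector nondegeneracy (your ``principal obstacle''), your geometric explanation via the tangent direction of $\mathfrak{I}$ versus the kernel of the rank-one limit is correct in spirit, but the paper's argument is a one-line algebraic contradiction you could adopt directly: if the unit eigenvectors coalesce, apply $J\phi_j(q)$ to their difference; the left side tends to $0$ (bounded operator on a null sequence) while the right side tends to a nonzero multiple of the common limit vector, since exactly one eigenvalue stays bounded away from zero. This is sharper than appealing to the analytic disc, which at this point in the argument has not yet been analyzed.
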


\begin{remark}
From now on, by $A_j$, we mean the $A_j$ defined in the lemma above.
\end{remark}

\begin{proof}
Since we already showed $J\phi_j(q)\phi_j(z,w)$ is normal, and by computation, we have
\begin{equation*}
\begin{pmatrix} 1&0\\0&\frac{1}{\lambda^2_j(q)}\end{pmatrix}\begin{pmatrix} u^j_1(q)&u^j_2(q)\\v^j_1(q)&v^j_2(q)\end{pmatrix}=\begin{pmatrix} \lambda^1_j(q)&0\\0&1\end{pmatrix}\begin{pmatrix} u^j_1(q)&u^j_2(q)\\v^j_1(q)&v^j_2(q)\end{pmatrix}(J\phi_j(q))^{-1}.
\end{equation*}
To show $A_j$ is normal and converges to an automorphism, it is enough to show
\begin{enumerate}
\item $\|\begin{pmatrix}
\lambda^1_j(q)&0\\0&1\end{pmatrix}\begin{pmatrix} u^j_1(q)&u^j_2(q)\\v^j_1(q)&v^j_2(q)
\end{pmatrix}\|_{op}$ is bounded, and 
\item $\det \begin{pmatrix} u^j_1(q)&u^j_2(q)\\v^j_1(q)&v^j_2(q)\end{pmatrix}$ has a lower nonzero bound.
\end{enumerate}

We can first show that $\|\bigl(\begin{smallmatrix} \lambda^1_j(q)&0\\0&1\end{smallmatrix}\bigr)\bigl(\begin{smallmatrix} u^j_1(q)&u^j_2(q)\\v^j_1(q)&v^j_2(q)\end{smallmatrix}\bigr)\|_{op}$ is bounded. This is true because the norm of the matrix, which is equivalent to the operator norm, equals 
\begin{equation*}
\sqrt{u^j_1(q)^2+u^j_2(q)^2+v^j_1(q)^2+v^j_2(q)^2}=\sqrt{2},
\end{equation*}
and $\|\begin{pmatrix}
\lambda^1_j(q)&0\\0&1
\end{pmatrix}\|$ is also bounded.

Next, we show $|\det \bigl(\begin{smallmatrix} u^j_1(q)&u^j_2(q)\\v^j_1(q)&v^j_2(q)\end{smallmatrix}\bigr)|$ has a lower nonzero bound. For this aim, it is enough to show that the limit $\begin{pmatrix} u^0_1(q)\\u^0_2(q)\end{pmatrix}$ and $\begin{pmatrix}v^0_1(q)\\v^0_2(q)\end{pmatrix}$ are linearly independent. Assume they are not, so without loss of generality, $\begin{pmatrix}
u^j_1(q)\\u^j_2(q)
\end{pmatrix}-\begin{pmatrix}
v^j_1(q)\\v^j_2(q)
\end{pmatrix}\rightarrow 0$ as $j\rightarrow\infty$. We also have $J\phi_j(q)\begin{pmatrix} u^j_1(q)\\u^j_2(q)\end{pmatrix}=\lambda^j_1\begin{pmatrix} u^j_1(q)\\u^1_j(q)\end{pmatrix}$ and $J\phi_j(q)\begin{pmatrix} v^j_1(q)\\v^j_2(q)\end{pmatrix}=\lambda^2_j\begin{pmatrix} v^j_1(q)\\v^j_2(q)\end{pmatrix}$, and then $J\phi_j(q)\begin{pmatrix} u^j_1(q)-v^j_1(q)\\u^j_2(q)-v^j_2(q)\end{pmatrix}=\lambda^1_j\begin{pmatrix} u^j_1(q)\\u^j_2(q)\end{pmatrix}-\lambda^2_j\begin{pmatrix} v^j_1(q)\\v^j_2(q)\end{pmatrix}$. On the left hand side, since $J\phi_j(q)$ is a bounded operator, $\lambda^1_j\begin{pmatrix} u^j_1(q)\\u^j_2(q)\end{pmatrix}-\lambda^2_j\begin{pmatrix} v^j_1(q)\\v^j_2(q)\end{pmatrix}\rightarrow 0$ which is impossible (because $\lambda^2_j\rightarrow 0$ while $\lambda^1_j\rightarrow C\in\mathbb{C}$ where $C$ is a nonzero number).
\end{proof}

\begin{lemma}[the orbit accumulation variety case]\label{3}
Let $\Omega$ be a bounded domain in $\mathbb{C}^2$. Assume there is a family of automorphisms $\phi_j=(f_j, g_j)$ and an interior point $q\in\Omega$ such that $\phi_j(q)\rightarrow p$ where $p\in\partial\Omega$. We also assume the normal domain of $\Omega$ at $p$ is locally convex and has a smooth boundary around $p$. If the case is of the orbit accumulation variety case, then $\Omega$ is biholomorphic to $\mathbb{D}\rtimes_\theta\mathbb{H}^+$.
\end{lemma}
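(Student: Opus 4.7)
The plan is to apply the scaling map $A_j = B_j^{-1}\phi_j$ from the preceding lemma to obtain a biholomorphism $\Phi : \Omega \to \widehat{\Omega}$, and then to identify $\widehat{\Omega}$ by computing the limit of the scaled local defining equation, in direct parallel with Lemma \ref{1} and Proposition \ref{2}. Writing $B_j = (b^j_{ik})_{i,k=1,2}$ and normalizing $p = (0,0)$ with local defining function $v > \rho(z,\bar{z}, u)$, the boundary of $A_j(\Omega)$ near the origin is given by
\begin{equation*}
\Im(b^j_{21}z + b^j_{22}w) = \rho\bigl(b^j_{11}z + b^j_{12}w,\, \Re(b^j_{21}z + b^j_{22}w)\bigr),
\end{equation*}
so the task reduces to computing the limit of this equation as $j\to\infty$.

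The essential new input, absent in Lemma \ref{1} and Proposition \ref{2}, is that the image of $\phi_0 = \lim_{j\to\infty}\phi_j$ is a genuine analytic disc contained in $\partial\Omega$ through $p$, as established in the proposition immediately preceding this lemma. Consequently the normalization $B_j^{-1}$ stabilizes the direction along that disc (corresponding to the non-vanishing eigenvalue $\lambda^2_j$) while amplifying the transverse direction (corresponding to $\lambda^1_j \to 0$). After a preliminary linear change of variable aligning the $w$-axis with the disc direction and the $z$-axis with the transverse direction, the coefficients of $B_j$ split so that only one column degenerates and the other converges to a finite nonzero limit. Plugging into the defining equation, the arguments of $\rho$ are evaluated on expressions that along the disc direction remain of order one---and there $\rho$ vanishes identically because the analytic disc already sits inside $\partial\Omega$---while in the transverse direction they are damped. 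Combined with the hypothesis that $p$ is not of infinite type II, which rules out the pathological oscillations that would otherwise obstruct the limit (as in the step used in Lemma \ref{dier} and Lemma \ref{disa}), the scaled defining equation converges to one of the form
\begin{equation*}
\Im\bigl(e^{-i\theta(z)} w\bigr) = 0
\end{equation*}
for a real continuous function $\theta$ of $z$; equivalently, the boundary of a half-plane rotated by $\theta(z)$.

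The domain $\widehat{\Omega}$ is therefore a fibration whose $w$-fiber above each $z$ is the half-plane $e^{i\theta(z)}\mathbb{H}^+$. The base of this fibration, identified with the image $\mathfrak{I}$ of $\phi_0$, is a one-dimensional complex manifold; by the uniformization theorem together with hyperbolicity inherited from the boundedness of $\Omega$, the base must be biholomorphic to $\mathbb{D}$ ($\mathbb{C}$ and $\mathbb{CP}^1$ being ruled out). Hence $\widehat{\Omega}\cong \mathbb{D}\rtimes_\theta \mathbb{H}^+$, which combined with $\Omega \cong \widehat{\Omega}$ concludes the proof. The main obstacle I anticipate is the careful bookkeeping of all the coefficient limits---in particular, showing that the function $\theta$ descends to a well-defined continuous function on the base rather than merely being defined abstractly on $\mathfrak{I}$---and this is precisely where the local convexity of the normal domain and the ``not of infinite type II'' assumption become indispensable.
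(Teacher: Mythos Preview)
Your proposal has a genuine gap and a misattribution of hypotheses.

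First, the assumption ``$p$ is not of infinite type II'' is \emph{not} part of this lemma's statement, and the paper's proof does not use it. You call it ``indispensable'' and plan to invoke Lemmas \ref{dier} and \ref{disa} to tame the limit of $\rho$; but that hypothesis belongs to Theorem \ref{main} and is consumed entirely in Proposition \ref{2} (the orbit accumulation \emph{point} case). Lemma \ref{3} is proved for arbitrary $p$, with no type restriction.

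Second, your strategy---pushing the scaled boundary through the single global equation $\Im(b^j_{21}z+b^j_{22}w)=\rho(b^j_{11}z+b^j_{12}w,\Re(b^j_{21}z+b^j_{22}w))$ and arguing that $\rho$ vanishes along the disc---has a gap. Vanishing of $\rho$ \emph{on} the analytic disc $\mathfrak{I}$ does not control $\rho$ at the nearby scaled points; to pass to the limit you would need quantitative decay of $\rho$ in a neighborhood of $\mathfrak{I}$, and that is exactly where type hypotheses would re-enter. The paper sidesteps this entirely with a fiber-by-fiber argument: for each base point $(z_0,0)\in\mathfrak{I}$ it looks only at the boundary of the $w$-slice, writes its first-order expansion $0=k_1\Im\xi+k_2\Re\xi+o(|\xi|)$ in the $w$-plane, and observes that after dividing by $\lambda_j\to 0$ only the linear part survives. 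Thus each limit fiber is bounded by a straight line, yielding a rotated half-plane and the angle $\theta(z)$ directly. No information on $\rho$ beyond first order in the transverse variable is used, so the type of $p$ is irrelevant here.

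Minor point: your axes are inconsistent. You align the $w$-axis with the disc direction, yet later describe $\widehat{\Omega}$ as a fibration with $z$-base and $w$-fiber. The paper (and the definition of $\mathbb{D}\rtimes_\theta\mathbb{H}^+$) puts the disc $\mathfrak{I}$ in the $z$-direction after showing $v^0_1(q)=0$, $|v^0_2(q)|=1$.
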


\begin{proof}
Locate the domain such that $\phi_j(q)\rightarrow 0$ as $j\rightarrow\infty$ such that $\mathfrak{I}$ has tangent complex line $(z,0)$ at $(0,0)$ where $z\in\mathbb{C}$.
As before, let us observe the limit of the family of biholomorphisms
\begin{equation}\label{ho}
\begin{pmatrix}
1 & 0\\0& \frac{1}{\lambda^2_j(q)}
\end{pmatrix}
\begin{pmatrix}
u^j_1(q) &u^j_2(q)\\ v^j_1(q)& v^j_2(q)
\end{pmatrix}
\begin{pmatrix}
f_j(z,w)\\g_j(z,w)
\end{pmatrix}.
\end{equation}
We will also denote $\lambda^2_j(q)$ by $\lambda_j$ in this proof for simplicity.
Since $\begin{pmatrix}
u^j_1(q) \\ u^j_2(q)\end{pmatrix}$ and $\begin{pmatrix}
v^j_1(q) \\ v^j_2(q)
\end{pmatrix}$ are unit vectors, they have limits $\begin{pmatrix}
u^0_1(q) \\ u^0_2(q)\end{pmatrix}$ and $\begin{pmatrix}
v^0_1(q) \\ v^0_2(q)
\end{pmatrix}$. Thus, one observes that $v^0_1(q)z_0+v^0_2(q)w_0=0$ where $(z_0, w_0)\in\mathfrak{I}$ by letting $j\rightarrow\infty$ in Equation (\ref{ho}).

Hence, without loss of generality, we can assume $v^0_2(q)\neq 0$ (because otherwise $v^0_2(q)=v^0_1(q)=0$, since $z_0\not\equiv 0$ by the assumption.), and we apply the holomorphic transform $(z,w)\mapsto (z, w-\dfrac{v^0_1z}{v^0_2})$. As a result, we can always assume $\mathfrak{I}$ is contained in $\mathbb{C}\times\lbrace0\rbrace$. However, our domain is already a normal domain around $(0,0)$. After observation of the complex tangent plane at $(0,0)$, we find that $|v^0_2(q)|=1$ and $v^0_1(q)=0$.

Consider $(\phi^0)^{-1}(z_0,0)$, where $(z_0,0)\in\mathfrak{I}$. One can observe that after applying $A_j$ (we denote $(z',w')$ the new coordinates after changing variables $A_j$), we have $z'=u^j_1(q)f_j(z,w)+u^j_2(q)g_j(z,w)\rightarrow u^0_1(q)z_0$, where $(z_0, 0)\in\mathfrak{I}$. Note $u^0_1(q)\neq 0$, because otherwise $\begin{pmatrix}
u^0_1(q)&u^0_2(q)\\v^0_1(q)&v^0_2(q)
\end{pmatrix}$ has rank at most $1$, given $v^0_1(q)=0$.

Now, let us analyze the boundary after scaling as before. We assume 
\begin{equation*}
\begin{pmatrix}
b^j_{11}&b^j_{12}\\b^j_{21}&b^j_{22}
\end{pmatrix}^{-1}=\begin{pmatrix}
u^j_1(q)&u^j_2(q)\\v^j_1(q)&v^j_2(q)
\end{pmatrix}.
\end{equation*}
One can observe that $b^j_{21}\rightarrow 0$ because of $v^0_1(q)=0$.
We have the following transform
\begin{equation*}
\begin{split}
f_j(z,w)=b^j_{11}z'+b^j_{12}\lambda_jw',\\
g_j(z,w)=b^j_{21}z'+b^j_{22}\lambda_jw'
\end{split}.
\end{equation*}
Without loss of generality, let $0=\alpha+k_1\Im\xi+k_2\Re\xi+o(|\xi|)$ be the defining function of the projection of $(\phi^0)^{-1}(z_0,0)$ to the $w$-plane. By our assumption, $g_j(z,w)\rightarrow 0$ and then $\alpha=0$. The resulting defining function will be a straight line passing through the limit of $-\dfrac{b^j_{21}}{b^j_{22}\lambda_j}z'$ in the $w$-plane which is also holomorphic in $z'$, where $z'=u^0_1(q)z_0$, $(z_0,0)\in\mathfrak{I}$. Note here, $|\dfrac{b^j_{21}}{b^j_{22}\lambda_j}|$ is bounded otherwise, $-\dfrac{b^j_{21}}{b^j_{22}\lambda_j}z'$ are infinity for those  $z'\neq0$ and this is impossible because it will not form a boundary of a domain.

So, $\Omega$ is biholomorphic to $\mathfrak{I}'\rtimes_\theta\mathbb{H}^+$, where $\mathfrak{I}'=\lbrace (z', w'): z'=u^0_1(q)z_0, w'=-\lim_{j\to\infty}\dfrac{b^j_{21}}{b^j_{22}\lambda_j}z'\rbrace$.

Thus $\mathfrak{I}'$ is a Riemann surface which is biholomorphic to the unit disc in $\mathbb{C}^2$, because $\mathfrak{I}'$ is biholomorphic to $\mathfrak{I}$ which is bounded in $\mathbb{C}$ (thanks to the uniformization theorem). After a biholomorphic mapping $(h, \rm Id)$, where $h$ is the biholomorphic mapping which maps the Riemann surface onto unit disc $\mathbb{D}$ by the uniformization theorem and $\rm Id$ is the identity map, we obtain that the original domain is biholomorphic to $\mathbb{D}\rtimes_\theta\mathbb{H}^+$.
\end{proof}

\begin{proof}[Proof of Theorem \ref{main}]
By Proposition \ref{s1p}, we obtain a domain $\widehat{\Omega}$ by analyzing its defining function in both the orbit accumulation point case and the orbit accumulation variety case. The proof can be completed by Lemma \ref{1}, Proposition \ref{2} and Lemma \ref{3}.
\end{proof}

\begin{corollary}\label{fs}
Let $\Omega$ be a bounded domain in $\mathbb{C}^2$. Assume there is a family of automorphisms $\phi_j=(f_j, g_j)$ and an interior point $q\in\Omega$ such that $\phi_j(q)\rightarrow p$, where $p\in\partial\Omega$. We also assume the normal domain of $\Omega$ at $p$ is locally convex and has a smooth boundary around $p$. Then $\rm Aut(\Omega)$ contains $\mathbb{R}$.
\end{corollary}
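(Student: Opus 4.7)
The plan is to apply Theorem \ref{main} to reduce the problem to a classification of two standard model domains, and then to exhibit, in each case, an explicit continuous one-parameter subgroup of the automorphism group of the model. Since $\mathrm{Aut}(\Omega)$ is invariant (up to conjugation) under biholomorphism, producing $\mathbb{R} \hookrightarrow \mathrm{Aut}(\Omega')$ for the model $\Omega'$ suffices.

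By Theorem \ref{main}, $\Omega$ is biholomorphic to either
\begin{equation*}
\Omega_1 = \{(z,w) \in \mathbb{C}^2 : \Im w > \rho_k(z,\bar z)\}
\end{equation*}
where $\rho_k$ is a homogeneous real polynomial of degree $k$ (coming from Lemma \ref{1} and Proposition \ref{2}), or
\begin{equation*}
\Omega_2 = \mathbb{D} \rtimes_\theta \mathbb{H}^+
\end{equation*}
from Lemma \ref{3}. (Implicit here is that $p$ is not of infinite type II, so that Theorem \ref{main} applies.) In the first case, I would take the one-parameter family
\begin{equation*}
\phi_t(z,w) = (e^{t} z, e^{kt} w), \qquad t \in \mathbb{R}.
\end{equation*}
Homogeneity of $\rho_k$ gives $\rho_k(e^t z, e^t \bar z) = e^{kt}\rho_k(z,\bar z)$, and since $\Im(e^{kt} w) = e^{kt}\Im w$, the inequality $\Im w > \rho_k(z,\bar z)$ is preserved. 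The map $t \mapsto \phi_t$ is a continuous injective group homomorphism $\mathbb{R} \to \mathrm{Aut}(\Omega_1)$. (Alternatively, the translation family $(z,w) \mapsto (z, w + t)$, $t \in \mathbb{R}$, also works since $\rho_k$ is independent of $\Re w$.)

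In the second case, $\Omega_2 = \{(z,w) : z \in \mathbb{D},\ w \in e^{i\theta(z)} \mathbb{H}^+\}$ is invariant under the action
\begin{equation*}
\psi_t(z,w) = (z, e^t w), \qquad t \in \mathbb{R},
\end{equation*}
because each fiber $e^{i\theta(z)} \mathbb{H}^+$ is a rotated half-plane, which is preserved by multiplication by any positive real number $e^t$. This again produces a continuous injective group homomorphism $\mathbb{R} \to \mathrm{Aut}(\Omega_2)$. Pulling this subgroup back through the biholomorphism supplied by Theorem \ref{main} deposits an isomorphic copy of $\mathbb{R}$ inside $\mathrm{Aut}(\Omega)$, which completes the argument.

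There is essentially no technical obstacle here once Theorem \ref{main} is in hand; the only care needed is to observe that both model defining functions possess the relevant scaling/translation symmetry, and to verify (which is routine) that the resulting one-parameter families are continuous group embeddings of $\mathbb{R}$. The entire content is packed into the classification theorem, so this corollary is, in effect, a direct readout of the structure of the two model domains produced by the scaling construction.
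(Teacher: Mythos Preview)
Your argument has a genuine gap: you invoke Theorem \ref{main}, which carries the hypothesis that $p$ is not of infinite type II, but Corollary \ref{fs} does \emph{not} include that hypothesis. You flag this yourself (``Implicit here is that $p$ is not of infinite type II''), but it is not implicit---it is simply absent from the statement. So as written, your proof does not cover the infinite type II case at all, and the reduction to the two models $\Omega_1$, $\Omega_2$ is unjustified in that case.

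The paper avoids this by not going through Theorem \ref{main}. In the orbit accumulation point case it uses Proposition \ref{s1p} directly to produce the biholomorphic model $\widehat{\Omega}$, and then observes (from the analysis around Equation (\ref{ust})) that the $\Re w$-dependent terms of the defining function vanish under the scaling \emph{regardless of the type of $p$}; the second and third terms there go to zero by the boundedness of $\Re(b^j_{21}z+b^j_{22}w)/b^j_{22}$, and that step uses nothing about the type. Hence $\widehat{\Omega}=\{\Im w>\rho(z,\bar z)\}$ for some real $\rho$ (not necessarily a homogeneous polynomial), and the translation $L_t:(z,w)\mapsto(z,w+t)$ gives the copy of $\mathbb{R}$. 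Your dilation family $(e^t z,e^{kt}w)$ is correct when $\rho=\rho_k$ is homogeneous, but it does not survive to the general $\rho$ produced in the type II situation; the translation does, which is why the paper uses it. In the orbit accumulation variety case your $\psi_t(z,w)=(z,e^t w)$ agrees (up to reparametrisation $t\mapsto -t$) with the paper's $L_t(z,w)=(z,w/e^t)$, and that part is fine.
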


\begin{remark}
Corollary \ref{fs} partially confirms Question 3.17 of \cite{FS01} in the case of bounded ``locally convex'' domain in $\mathbb{C}^2$.
\end{remark}

\begin{proof}
For the orbit accumulation point case, the translation is defined as $L_t:(z,w)\mapsto(z, w+t)$, where $t\in\mathbb{R}$. Because of the scaling method of Frankel and Kim, we can remove the term $O((\Re w)^2)+O(z\Re w)$ from $\Im w=\rho(z,\bar{z},0)+O((\Re w)^2)+O(z\Re w)$ no matter what the type of $p$ is. Moreover, $L_t$ is an automorphism for all domains of the form $\Im w=\rho(z, \bar{z})$. For the orbit accumulation variety case, let $L_t(z,w)=(z, \dfrac{w}{e^t})$. Then $L_t\in\rm Aut(\widehat{\Omega})$ and $L_t$ is isomorphic to $\mathbb{R}$.
\end{proof}

\section{A remark about Greene-Krantz conjecture}\label{sec3}
In this section we build the connection between our results and the following conjecture.
\begin{conjecture*}[Greene-Krantz]
Let $\Omega$ be a bounded domain with a smooth boundary in $\mathbb{C}^n$. Assume there is a family of automorphisms $\phi_j=(f_j, g_j)$ and an interior point $q\in\Omega$ such that $\phi_j(q)\rightarrow p$, where $p\in\partial\Omega$. Then $p$ is of finite type.
\end{conjecture*}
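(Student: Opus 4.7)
The plan is to attack the conjecture in the special case actually covered by the machinery of this paper — bounded $\Omega\subset\mathbb{C}^2$ whose normal domain at the orbit accumulation point $p$ is locally convex and has smooth boundary around $p$, and with $p$ not of infinite type II — by contradiction, leveraging the dichotomy established in Section \ref{sec1}. Suppose $p$ were of infinite type. By the trichotomy at the end of Section \ref{sec1}, the sequence $\{\phi_j\}$ is either in the orbit accumulation point case (both eigenvalues of $J\phi_j$ tend to $0$) or in the orbit accumulation variety case (exactly one eigenvalue tends to $0$). I would rule out each case separately.

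In the orbit accumulation point case, the hypothesis that $p$ is not of infinite type II means $p$ is of infinite type I. Proposition \ref{2} then directly produces a contradiction: the scaling limit of the defining function collapses to $v=0$ by Lemma \ref{disa}, so $\widehat{\Omega}$ cannot be biholomorphic to a bounded domain, contradicting the fact (Proposition \ref{s1p}) that $\widehat{\Omega}$ is biholomorphic to the bounded domain $\Omega$. So this case is already closed by results stated earlier in the paper.

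In the orbit accumulation variety case, Lemma \ref{3} identifies $\Omega$ with a domain of the form $\mathbb{D}\rtimes_\theta\mathbb{H}^+$. As emphasized in the remark following the definition of $\mathbb{D}\rtimes_\theta\mathbb{H}^+$, that model has a Levi-flat, at most piecewise smooth boundary, and it is not a priori clear that it can never be biholomorphic to a globally smoothly bounded domain. To close this branch I would invoke the rigidity result of \cite{CV13} flagged in the introduction, which rules out the bidisc as a biholomorphic model for a bounded smoothly bounded domain, and adapt it to the whole family $\mathbb{D}\rtimes_\theta\mathbb{H}^+$; the Levi-flat fibration of $\partial(\mathbb{D}\rtimes_\theta\mathbb{H}^+)$ by the rotated half-planes $e^{i\theta(z)}\mathbb{H}^+$ takes the role of the product structure of $\mathbb{D}\times\mathbb{D}$. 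Since $\Omega$ is by hypothesis bounded with smooth boundary, this forces a contradiction, completing the second branch.

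Combining the two branches gives the conjecture under the stated hypotheses. The main obstacles I expect are twofold. First, the infinite type II case is the genuine barrier: the scaling-method argument behind Proposition \ref{2} rests on the self-similar inequality $\rho(az,a\bar z,0)\le |a|^{m_z}\rho(z,\bar z,0)$, and without it Lemma \ref{disa} fails and the scaling limit need not degenerate; handling infinite type II would require a genuinely new tool for non-self-similar flat boundaries, and in its absence the statement must be restricted exactly as in Theorem \ref{main}. Second, the step that promotes the \cite{CV13} bidisc rigidity to every $\mathbb{D}\rtimes_\theta\mathbb{H}^+$ is nontrivial and is where the technical work of Section \ref{sec3} is concentrated. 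Finally, the extension from $\mathbb{C}^2$ to $\mathbb{C}^n$ stays out of reach here because Lemma \ref{3} uses the uniformization theorem on the orbit variety, which is unavailable in higher complex dimension.
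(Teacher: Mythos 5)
The statement you are proving is labeled a \emph{conjecture} in the paper, and the paper does not prove it: the closest the paper gets is Corollary \ref{GK}, whose conclusion is precisely a dichotomy --- either $p$ is of finite type, or $\Omega$ is biholomorphic to $\mathbb{D}\rtimes_\theta\mathbb{H}^+$ with $\theta\not\equiv 0$ --- and \emph{not} the assertion that $p$ must be finite type. Your first branch (orbit accumulation point case, infinite type I ruled out via Lemma \ref{disa} and Proposition \ref{2}, with Proposition \ref{s1p} providing the biholomorphism to $\widehat{\Omega}$) is faithful to the paper and sound under the stated hypotheses. The genuine gap is in your second branch: you ``invoke'' an adaptation of the rigidity theorem of \cite{CV13} from the bidisc to the entire family $\mathbb{D}\rtimes_\theta\mathbb{H}^+$, but no such adaptation exists in the paper or, as far as the paper knows, anywhere. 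The paper says this explicitly in Section \ref{sec3}: ``Although $\mathbb{D}\rtimes_\theta\mathbb{H}^+$ is very possible not to biholomorphic to any bounded domain with a smooth boundary, we can not prove it here now. One possible method is to generalize the theorem in \cite{CV13} from product domains to `$\rtimes_\theta$' domains.'' Your claim that ``the technical work of Section \ref{sec3} is concentrated'' on this step mischaracterizes that section: it only treats $\theta\equiv 0$, i.e., the bidisc, where the proof leans on the genuine product structure (each factor pseudoconvex and satisfying condition $R$, so Theorem 1.1 of \cite{CV13} gives smooth extension to the boundary, contradicting the nonsmoothness of the bidisc boundary).

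The reason this step cannot simply be ``adapted'' is structural. For nonconstant $\theta$, the domain $\mathbb{D}\rtimes_\theta\mathbb{H}^+$ is not a holomorphic product: the twist $\theta$ is only a real continuous function of $z$, the fibration by rotated half-planes is not holomorphically trivializable, and the boundary is Levi-flat and merely piecewise smooth. The Bergman-theoretic machinery of \cite{CV13} (condition $R$, regularity of the Bergman projection on products) has no available analogue here, so your Levi-flat fibration cannot ``take the role of the product structure'' without genuinely new analysis. Consequently, in the orbit accumulation variety case --- which, as the proof of Lemma \ref{1} notes, is exactly the case forcing $p$ to be of infinite type --- your argument establishes only what Corollary \ref{GK} already states, and the conjecture remains open even in the restricted setting you chose. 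Your closing observations (infinite type II as a barrier to Lemma \ref{disa}, and the uniformization theorem obstructing the extension to $\mathbb{C}^n$) are accurate and agree with the paper's own remarks, but they do not repair the missing second branch.
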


Although $\mathbb{D}\rtimes_\theta\mathbb{H}^+$ is very possible not to biholomorphic to any bounded domain with a smooth boundary, we can not prove it here now. One possible method is to generalize the theorem in \cite{CV13} from product domains to ``$\rtimes_\theta$'' domains.

In this section, we show $\mathbb{D}\times\mathbb{H}^+$ (the bidisc) is not biholomorphic with any bounded domain with a smooth boundary in $\mathbb{C}^2$.

\begin{theorem}
$\mathbb{D}\times\mathbb{H}^+$ is not biholomorphic with any bounded doman with a smooth boundary in $\mathbb{C}^2$.
\end{theorem}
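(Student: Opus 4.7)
The plan is to argue by contradiction. Suppose there is a bounded domain $\Omega\subset\mathbb{C}^2$ with smooth boundary and a biholomorphism $\Phi:\Omega\to \mathbb{D}\times\mathbb{H}^+$. The first step is to transfer the noncompact automorphism group of $\mathbb{D}\times\mathbb{H}^+$ to $\Omega$. Since $L_t(z,w)=(z,e^{-t}w)$ is an automorphism of $\mathbb{D}\times\mathbb{H}^+$ with $L_t(z_0,w_0)\to (z_0,0)\in\partial(\mathbb{D}\times\mathbb{H}^+)$ as $t\to+\infty$, setting $\phi_j:=\Phi^{-1}\circ L_j\circ\Phi$ produces a sequence in $\mathrm{Aut}(\Omega)$ and an interior point $q\in\Omega$ with $\phi_j(q)\to p$ for some $p\in\partial\Omega$. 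Thus $\mathrm{Aut}(\Omega)$ is noncompact and $p$ is an orbit accumulation point on the smooth boundary of $\Omega$.

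The second step is to apply the scaling machinery developed earlier in the paper. Since $\partial\Omega$ is $C^\infty$ near $p$, after the preliminary global transformation described before Definition~0.1 we obtain a normal domain at $p$; local convexity of that normal domain holds generically on a smooth boundary, so the hypotheses of Proposition~\ref{s1p} and Theorem~\ref{main} can be arranged. Theorem~\ref{main} then forces $\Omega$ to be biholomorphic either to a homogeneous polynomial model $\{v>\rho_k(z,\bar z)\}$ (finite type case from Lemma~\ref{1}) or to a domain of the form $\mathbb{D}\rtimes_\theta\mathbb{H}^+$ (orbit accumulation variety case from Lemma~\ref{3}). Combining this with the biholomorphism $\Phi$ yields an explicit model for $\mathbb{D}\times\mathbb{H}^+$ itself.

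The third step is to invoke the result of \cite{CV13}: the bidisc, being a genuine product, carries rigidity (e.g.\ nonextendability of biholomorphisms, or obstructions coming from the structure of its Shilov boundary and its failure of Condition R) that is incompatible with being biholomorphic to a smoothly bounded domain in $\mathbb{C}^2$. Applying this directly to $\Phi$, or to the scaled limit model obtained in the previous step, produces the desired contradiction and completes the argument.

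The main obstacle I expect is the second step: guaranteeing that the accumulation point $p\in\partial\Omega$ coming from the transferred automorphisms really satisfies the hypotheses of Theorem~\ref{main} --- in particular, that $p$ is not of infinite type~II and that the normal domain at $p$ is locally convex. The orbit under $\phi_j$ corresponds under $\Phi$ to a fiber $(z_0,iy)$ with $y\to 0^+$ in the bidisc, whose limit lies on the Levi-flat face $\mathbb{D}\times\partial\mathbb{H}^+$; one needs to argue that generically such a $p$ can be selected on the smooth part of $\partial\Omega$ and that the scaling construction converges, both of which rely on controlling how the noncompact orbit interacts with the smooth defining function of $\Omega$.
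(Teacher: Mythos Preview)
Your plan takes a very different route from the paper, and the route has a genuine gap.

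The paper's proof is short and does not touch the scaling machinery at all. It observes that $\mathbb{D}\times\mathbb{H}^+$ is biholomorphic to the bidisc $\mathbb{D}\times\mathbb{D}$ via a Cayley transform, then applies Theorem~1.1 of \cite{CV13} directly: each factor $\mathbb{D}$ is pseudoconvex and satisfies Condition~$R$, so any biholomorphism from $\mathbb{D}\times\mathbb{D}$ onto a smoothly bounded domain would extend smoothly to the closure. A smooth extension of a biholomorphism is a diffeomorphism of closures, which is impossible because the bidisc has corners. That is the whole argument.

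Your plan, by contrast, tries to feed the hypothetical $\Omega$ back into Theorem~\ref{main}. The obstacle you flag in your last paragraph is fatal, not merely technical. Theorem~\ref{main} needs the normal domain at the accumulation point $p$ to be locally convex and $p$ not of infinite type~II; nothing about ``$\Omega$ is smoothly bounded and biholomorphic to the bidisc'' forces either condition, and your claim that local convexity ``holds generically on a smooth boundary'' is not justified (and in any case you need it at the specific $p$ produced by the orbit, not at a generic point). Even if Theorem~\ref{main} did apply, its output would only tell you $\Omega$ is biholomorphic to a model domain, and you would still owe the reader a proof that \emph{that} model is not biholomorphic to a smoothly bounded domain --- which is the original question again. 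Finally, your description of \cite{CV13} is inverted: the relevant result is a \emph{positive} smooth-extension theorem for biholomorphisms out of products whose factors satisfy Condition~$R$; it is the extension itself, combined with the non-smoothness of $\partial(\mathbb{D}\times\mathbb{D})$, that yields the contradiction.
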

\begin{proof}
$\mathbb{D}\times\mathbb{H}^+$ is biholomorphic to $\mathbb{D}\times\mathbb{D}$ by a Cayley transform on the second variable. We observe $\mathbb{D}$ is pseudoconvex and satisfies condition $R$. Suppose there is a biholomorphism $f$ which maps the bidisc $\mathbb{D}\times\mathbb{D}$ onto a bounded domain $\Omega$ with a (globally) smooth boundary. Then $f$ extends smoothly up to boundary of the bidisc and $\Omega$ by Theorem 1.1 in \cite{CV13}. However, the bidisc does not have a smooth boundary, which contradicts the extension of $f$.
\end{proof}

\begin{corollary}\label{GK}
Let $\Omega$ be a bounded domain of smooth boundary in $\mathbb{C}^2$. Assume there is a family of automorphisms $\phi_j=(f_j, g_j)$ and an interior point $q\in\Omega$ such that $\phi_j(q)\rightarrow p$, where $p\in\partial\Omega$ is not of infinite type II. We also assume the normal domain of $\Omega$ at $p$ is locally convex around $p$. Then either $p$ is finite type or $\Omega$ is biholomorphic to $\mathbb{D}\rtimes_\theta\mathbb{H}^+$, where $\theta\not\equiv 0$. 
\end{corollary}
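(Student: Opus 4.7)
The plan is to combine Theorem \ref{main} with the non-embedding theorem for the bidisc just proved in this section. Under the hypotheses of the corollary, all the hypotheses of Theorem \ref{main} are in force: $\Omega$ is bounded, the boundary point $p$ is not of infinite type II, and the normal domain at $p$ is locally convex with smooth boundary (the latter coming for free from the global smoothness of $\partial\Omega$). Applying Theorem \ref{main} and tracing through its proof, exactly one of two scenarios occurs. If $p$ is of finite type, Lemma \ref{1} delivers a biholomorphism of $\Omega$ onto the polynomial model $\{v > \rho_k(z,\bar z)\}$, and the first alternative of the corollary holds at once. If instead $p$ is of infinite type (necessarily type I by the standing assumption), Proposition \ref{2} rules out the orbit accumulation point case, so we are in the orbit accumulation variety case and Lemma \ref{3} produces a biholomorphism $\Omega \cong \mathbb{D}\rtimes_\theta\mathbb{H}^+$ for some continuous real-valued $\theta$.

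It remains to upgrade this latter conclusion to $\theta \not\equiv 0$, which I would do by contradiction. Assume $\theta \equiv 0$; then $\mathbb{D}\rtimes_\theta\mathbb{H}^+ = \mathbb{D}\times\mathbb{H}^+$ is the bidisc, and $\Omega$ would be biholomorphic to $\mathbb{D}\times\mathbb{H}^+$. But $\Omega$ is by hypothesis a bounded domain in $\mathbb{C}^2$ with globally smooth boundary, which directly contradicts the theorem proved immediately above (an application of the boundary regularity result of \cite{CV13}) asserting that no such $\Omega$ can be biholomorphic to the bidisc. Therefore $\theta \not\equiv 0$, and the second alternative of the corollary holds.

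The essential mathematical content is entirely absorbed into Theorem \ref{main} and the bidisc non-embedding theorem; the corollary itself is a packaging argument, and I do not expect a genuine obstacle. The one point requiring care is simply stating clearly why the infinite type case forces the semi-direct product model with the polynomial model excluded, which is already isolated in Proposition \ref{2} and in the opening remarks of the proof of Lemma \ref{1}. No additional estimates or constructions are required beyond invoking these two results.
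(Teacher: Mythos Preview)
Your proposal is correct and matches the paper's intended (implicit) argument: the corollary is stated without a separate proof precisely because it follows by combining Theorem \ref{main} (via Lemma \ref{1}, Proposition \ref{2}, and Lemma \ref{3}) with the bidisc non-embedding theorem to exclude $\theta\equiv 0$. Your case split and the contradiction step are exactly what the paper has set up.
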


\begin{remark}
In this note, every result is under the assumption ``the boundary is locally bounded around accumulation points''. It will be very progressive if one can remove this condition. Also, whether $\mathbb{D}\rtimes_\theta\mathbb{H}^+$ biholomorphic to a bounded domain with a (globally) smooth boundary is an interesting question. The existence of a domain with a boundary of infinite type II should also be studied. Once one overcomes all of obstacles, it will lead him/her to the complete proof of the Greene-Krantz conjecture in $\mathbb{C}^2$.
\end{remark}

\bigskip
\bigskip
\noindent {\bf Acknowledgments}. I thank my advisor Prof. Steven Krantz a lot for always being patient about answering my every (even stupid) question. He also carefully read this note and made a lot of extremely useful comments. I also thank Prof. Guido Weiss who was interested in my work and has always been helpful to me. Specifically, he was very patient to listen to the presentation about the note.  Moreover, I have profited very much from the discussion with Prof. N. Mohan Kumar and a special thanks goes to him. Last, but not least, I appreciate Prof. Pascal Thomas for discussing the Kobayashi metric with me during the 39th Spring Lecture in Arkansas.

\bibliographystyle{plainnat}
\bibliography{mybibliography}
\end{document}